\documentclass[a4paper,12pt]{amsart}
\usepackage{amssymb}

\newtheorem{theorem}{Theorem}[section]
\newtheorem{definition}{Definition}[section]

\newtheorem{proposition}{Proposition}[section]
\newtheorem{lemma}{Lemma}[section]
\newtheorem{remark}{Remark}[section]

\newcommand{\R}{\mathbb R}
\newcommand{\N}{\mathbb N}
\newcommand{\Z}{\mathbb Z}

\newcommand{\C}{\mathbb C}

\newcommand{\ds}{\displaystyle}

\newcommand{\dx}{\, \mathrm{d}x}
\newcommand{\mt}{\mathcal{T}}
\newcommand{\mcc}{\mathcal{C}}
\newcommand{\dsigma}{\, \mathrm{d}\sigma}

\author[J.~Benameur$~^{a}$, Ch.~Elhechmi$~^{b}$ and G. Benhenda$~^{c}$ ]{Jamel Benameur, Chokri Elhechmi and Gmar Benhenda}

\address{$a$: Department of Mathematics College of Sciences,
King Saud University, Riyadh 11451, Kingdom of Saudi Arabia}
\email{\sl jbenameur@ksu.edu.sa}
\address{$b,c$:\;Research Laboratory Mathematics and Applications LR17ES11, Department of Mathematics, Faculty of Science of Gab\`es, university of Gab\`es; Tunisia}
\email{\sl chokri.elhechmi@issatm.rnu.tn}
\email{\sl benhendagmar@gmail.com}
\title[Laplace problem with an exponential nonlinear ...]
{Laplace problem with an exponential nonlinear boundary condition}
\begin{document}
	\begin{abstract}
In this paper, we establish a new result for the Laplace problem with  exponential Robin boundary conditions posed on the unit disk in $\R^2$. More precisely, we prove the existence and uniqueness of a solution under suitable smallness assumptions on the boundary data. Our approach relies on an iterative method combined with periodic Sobolev embedding results.
	\end{abstract}

	\subjclass[2020]{35-XX, 35A01, 35A02, 46E36, 46F05}
	\keywords{Nonlinear Robin condition, Existence and uniqueness of solution, Sobolev spaces, iterative method}

	\maketitle
	\tableofcontents

	\section{Introduction}
 In this paper,  we study an elliptic problem posed  on the open unit disc $\Omega=D(0,1) \subset \R^2$, with an  exponential boundary condition prescribed on a part of the boundary: 
$$
(S)\left\{\begin{array}{lll}
\displaystyle\Delta u=  0,& \;{\rm in }&\Omega,\\
\displaystyle \,u=   0, &\;{\rm on}\;&\Gamma_D,\\
\displaystyle\frac{\partial u}{\partial n}=  \phi(x) &\;{\rm on }&\;\Gamma_N,\\
\vspace{3mm}
\displaystyle\frac{\partial u}{\partial n}+\varphi(x)  \ds  \left({e^{\alpha u}- e^{-(1-\alpha )u}}\right)  =  g(x) &\;{\rm on}&\;\Gamma_R,
\end{array}\right.
$$
Here, the boundary $\Gamma = \partial \Omega = S(0,1)$ is decomposed into three open subsets $\Gamma_D$, $\Gamma_N$, and $\Gamma_R$, modulo a negligible set, satisfying

$$(H1)\left\{\begin{array}{l}
\Gamma=\overline{\Gamma_D}\cup \overline{\Gamma_N}\cup\overline{\Gamma_R},\\
\Gamma_D\cap \Gamma_N=\Gamma_D\cap\Gamma_R=\Gamma_N\cap\Gamma_R=\emptyset,\\
\sigma(\Gamma_D)>0,\;\sigma(\Gamma_N)>0,\;\sigma(\Gamma_R)>0,
\end{array}\right.
$$
with $\sigma $ is the  superficial measure on $\partial\Omega$.\\
The data of the problem satisfy 
$$\phi \in L^2(\Gamma_N) ,\quad  \phi \not \equiv 0, \quad g \in L^2(
\Gamma_R),\quad\varphi\in C^0(\overline{\Gamma_R}) \text{ and }\alpha \in (0,1).$$
{ For $\alpha \in (0,1)$, we introduce  the function $f_\alpha$ defined by}
$$
f_\alpha(r) = \left\{
\begin{array}{llll}
\ds \frac{e^{\alpha x}- e^{-(1-\alpha )r}}{r} & \mbox{ if } r \neq 0,\\
1 & \mbox{ if } r = 0,
\end{array}
\right.
$$
then, problem $(S)$ can be  rewritten as  
$$
(S)\left\{\begin{array}{lll}
\displaystyle\Delta u = 0, &\;{\rm in}&\;\Omega,\\
\displaystyle u = 0, &\;{\rm on}&\;\Gamma_D,\\
\displaystyle\frac{\partial u}{\partial n} = \phi(x) & \;{\rm on }&\;\Gamma_N,\\
\displaystyle\frac{\partial u}{\partial n}+\varphi(x) f_\alpha(u)u = g(x)  &\;{\rm on}&\;\Gamma_R.
\end{array}\right.
$$
\begin{remark}
The function $f_\alpha$ satisfies the following properties:
\begin{itemize}
\item[{\bf(P1)}] $\forall\, r \in \R:\;\;f_\alpha(r) > 0$.
\item[{\bf(P2)}] $\forall\, r \in \R:\;\;0\leq f_\alpha(r)r^2 \leq 2\ds\sum_{k=1}^\infty \frac{\delta^k}{k!} |r|^{k+1}$, \text{ where } $\delta = \max(\alpha, 1-\alpha)$.
\item[{\bf(P3)}] $\forall \,r \in \R\setminus\{0\}:\;\; \leq f_\alpha(r) \leq 2\delta(1 +\delta|r|e^{\delta|r|})$.
\end{itemize}
 The proof of properties {\bf(P1)} and {\bf(P2)} are obtained by direct  calculation, while the proof of {\bf(P3)} is given in  {\rm{\bf Appendix A}}.
\end{remark}
%
\noindent A first approximation of problem $(S)$ is obtained by replacing the nonlinear boundary condition on $\Gamma_R$ with the linear Robin condition 
 $$
\ds\frac{\partial u}{\partial n}+\varphi(x) u    =  g(x)\;{\rm on}\;\Gamma_R,
$$
 This linear case has been studied by several authors. In particular, S. Chaabane {\it et al.} (\cite{chok-matcom,CJ1}) established existence and uniqueness results by applying the Lax-Milgram theorem to the variational formulation associated with the problem. In addition, numerical results were provided for both the direct problem and the inverse problem of identifying the Robin coefficient.\\
In the case where the boundary condition on  $\Gamma_R$ is quadratic, specifically:
$$
\displaystyle\frac{\partial u}{\partial n}+\varphi(x)  u +\psi(x)u^2 =  g(x) \;{\rm on}\;\Gamma_R.
$$
J. Benameur {\it et al.} \cite{J-C} established the existence of solutions in an appropriate Hilbert space $V$. Uniqueness was proved  within a closed ball of $V$. These results were obtained under suitable assumptions on the functions $\varphi$ and $\psi$.  The proofs rely on an iterative method based on the construction of a sequence of linear boundary value problems, 
which converges to the solution  of the original problem with the quadratic nonlinear boundary condition.
\\ 
A generalization of this study was recently proposed by C. Elhechmi \cite{chok1}. The author investigated an elliptic partial differential equation subject to a polynomial Robin boundary condition, established existence and uniqueness results for the associated solutions, and developed an iterative scheme for their construction.\\
  
In this work, we investigate problem $(S)$, characterized by an exponential-type boundary condition on  $\Gamma_R$. To establish existence and uniqueness results for solutions to problem $(S)$,  we employ an iterative scheme by constructing a sequence of linear problems $(S_k)_{k \in \mathbb{N}}$ designed to approximate $(S)$. We prove that each linearized problem $(S_k)$ admits a unique solution $u_k$, and that the sequence $(u_k)$ converges to the unique solution of the original nonlinear problem $(S)$.\\

The analysis relies on the application of suitable Sobolev embedding results in the periodic setting, together with the development of specific technical estimates. It should be noted that the uniqueness result is local, established within a specific closed ball of the Hilbert space $V$, the construction of which is detailed in the following section.\\

The paper is organized as follows. In Section~\ref{sec-prelm}, we present some preliminary results and state the main result of this work. In Section~\ref{sect-approx}, we define the sequence of linear problems $(S_k)_{k \in \mathbb{N}}$ and prove the existence and uniqueness of a solution to each problem $(S_k)$. In Section \ref{sect-cnvgce}, we provide a rigorous analysis of the convergence of the sequence of solutions $(u_k)$ to a limit $u$ within the Hilbert space $V$. In Sections~\ref{sect-exist} and~\ref{sec-uniq}, we establish the existence and uniqueness of a solution to problem $(S)$, respectively. Finally, in Section \ref{append}, we derive an explicit estimation of the Sobolev embedding constants relevant to our periodic setting.
\section{Preliminaries  and main result}\label{sec-prelm}
The linear operator is defined and continuous by
$$\tau_0:H^{1}(\Omega)\rightarrow L^2(\partial\Omega),\;u\mapsto u_{/\partial\Omega}.$$
Precisely, $\tau_0$ is continuous from $(H^{1}(\Omega),\|.\|_{H^{1}})$ to $(L^2(\partial\Omega),\|.\|_{L^2})$. The image of the operator $\tau_0$ is noted
$H^{1/2}(\partial\Omega)$. This space is endowed by the following norm 
$$\|\varphi\|_{H^{1/2}}=\inf\{\|u\|_{H^1}:\;u\in H^{1}(\Omega);\;u_{/\partial\Omega}=\varphi \}.$$
Then, we can define the following operator
$$\tilde{\tau}_0:(H^{1}(\Omega),\|.\|_{H^{1}})\rightarrow (H^{1/2}(\partial\Omega),\|.\|_{H^{1/2}}),\;u\mapsto u_{/\partial\Omega}.$$
Clearly, $\tilde{\tau}_0$ is continuous and $\|\tilde{\tau}_0\|\leq 1$. By using this definition of the space $H^{1/2}(\partial\Omega)$, we obtain the following embeddings. Precisely, we have the following classical result.
\begin{lemma}(\cite{Jer-Dron})\label{lem02} Let $\Omega\subset\R^2$ be a Lipschitzian domain. Then, for $p\in[2,+\infty)$ we have
$$H^{1/2}(\partial\Omega)\hookrightarrow L^p(\partial\Omega).$$
Precisely, there is a constant
$C_{\Omega,p} > 0$, depending only on $p$ and domain $\Omega$, such that
\begin{equation}\label{eq-in1}
\|u\|_{L^p(\partial\Omega)}\leq C_{\Omega,p} \|u\|_{H^{1/2}(\partial\Omega)},\;\forall u\in H^1(\Omega).
\end{equation}
and
\begin{equation}\label{eq-in11}\|f\|_{L^p(\partial\Omega)} \leq C_{\Omega,p} \|f\|_{H^1(\Omega)},\;\forall f\in H^1(\Omega).\end{equation}
\end{lemma}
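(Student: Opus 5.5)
The plan is to prove the first inequality \eqref{eq-in1} in two stages, and then read off \eqref{eq-in11} from the definition of the $H^{1/2}$ norm as an infimum. Since the paper only needs $\Omega=D(0,1)$, I will parametrize $\partial\Omega$ by $\theta\in[0,2\pi)$. A trace $v=u_{/\partial\Omega}$ then becomes a $2\pi$-periodic function with Fourier coefficients $\hat v(n)$, and $\dsigma=\mathrm{d}\theta$. For a general Lipschitz domain the same argument works after covering $\partial\Omega$ by finitely many Lipschitz charts with a partition of unity. This reduces everything to functions on an interval or on $\R$, and the constants then depend on the charts, hence on $\Omega$.

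\textbf{Step 1 (identify the norm).} I will show that the infimum norm $\|v\|_{H^{1/2}(\partial\Omega)}$ dominates, up to a constant, the periodic norm
$$\Big(\sum_{n\in\Z}(1+|n|)|\hat v(n)|^2\Big)^{1/2}.$$
For the disk this is explicit. Among all $u\in H^1(\Omega)$ with trace $v$, the harmonic extension $u=\sum_n \hat v(n) r^{|n|}e^{in\theta}$ minimizes the Dirichlet energy, and
$$\|\nabla u\|_{L^2}^2=2\pi\sum_n |n|\,|\hat v(n)|^2,\qquad \|v\|_{L^2(\partial\Omega)}^2\le C\|u\|_{H^1}^2$$
by continuity of $\tau_0$. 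Hence for every extension $u$,
$$\sum_n(1+|n|)|\hat v(n)|^2\le C\|u\|_{H^1(\Omega)}^2 .$$
Taking the infimum over $u$ gives the claim.

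\textbf{Step 2 (periodic embedding).} This is the main obstacle. The key is to show that $H^{1/2}(\mathbb T)\hookrightarrow L^p(\mathbb T)$ for every finite $p\ge2$; the embedding fails for $p=\infty$. I would try one of two routes.

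\emph{Hausdorff--Young route.} For $p\ge 2$ with conjugate $p'\in(1,2]$, we have $\|v\|_{L^p}\le C\|\hat v\|_{\ell^{p'}}$. By H\"older,
$$\sum|\hat v(n)|^{p'}\le \Big(\sum(1+|n|)|\hat v(n)|^2\Big)^{p'/2}\Big(\sum(1+|n|)^{-p'/(2-p')}\Big)^{1-p'/2}.$$
The last series converges exactly when $p'/(2-p')>1$, that is $p'>1$, i.e.\ $p<\infty$. This gives an explicit constant, which is in the spirit of the explicit constants promised in the appendix.

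\emph{Littlewood--Paley/Bernstein route (alternative).} Decompose $v$ dyadically and use the bound
$$\|\Delta_j v\|_{L^p}\le 2^{j(1/2-1/p)}\|\Delta_j v\|_{L^2}.$$
This gives a convergent sum, but only for $p<\infty$, and the bookkeeping is more delicate. I would try Hausdorff--Young first.

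\textbf{Step 3 (conclude).} Combining Steps 1 and 2 yields \eqref{eq-in1}, applied with $v=u_{/\partial\Omega}$. Finally, since $u$ itself is one admissible extension of its own trace, $\|u_{/\partial\Omega}\|_{H^{1/2}}\le\|u\|_{H^1}$. Substituting this into \eqref{eq-in1} gives \eqref{eq-in11} with the same constant $C_{\Omega,p}$.
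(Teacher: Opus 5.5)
Your argument is correct for the disk, which is the only case the paper uses, and it takes a genuinely different route from the paper. The paper does not prove this lemma: it is quoted from the literature. The only argument given is for $\Omega=D(0,1)$ in Appendix C, and it establishes the \emph{critical} embedding $H^{s}_{2\pi}\hookrightarrow L^p_{2\pi}$, $s=\frac12-\frac1p$, by a frequency-splitting/level-set argument. For each height $t$ the Fourier series is cut at a frequency $N_t$. The low-frequency part is bounded in $L^\infty$ by Cauchy--Schwarz so that it stays below $t/4$, the high-frequency part is controlled by Chebyshev in $L^2$, and Fubini sums the pieces. Afterwards the paper uses $\|\cdot\|_{H^s_{2\pi}}\le\|\cdot\|_{H^{1/2}_{2\pi}}$.

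Your Hausdorff--Young plus weighted H\"older step instead exploits the slack $\frac12>\frac12-\frac1p$ directly. It cannot reach the endpoint, but the endpoint is never needed here. It also gives a cleaner constant. Since $\sum_{n\in\Z}(1+|n|)^{-p/(p-2)}\le p-1$, you get $\|v\|_{L^p_{2\pi}}\le (p-1)^{\frac{p-2}{2p}}\big(\sum_{n}(1+|n|)|\widehat v(n)|^2\big)^{1/2}$. This grows like $\sqrt p$, which is exactly what is needed for a bound of the form $\lambda_p^p\le C^p p^{\frac{p-2}{2}}$ in the series $I_k$ and $R$. The price is that Hausdorff--Young, and hence Riesz--Thorin, is used as a black box, whereas the paper's argument is elementary and reaches the sharp regularity index.

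Your Step 1 is a real addition. The paper passes from the trace (infimum) norm on $S(0,1)$ to the Fourier norm of $H^{1/2}_{2\pi}$ without comment. Your harmonic-extension computation, together with $(1+n^2)^{1/2}\le 1+|n|$, supplies exactly the inequality $\|v\|_{H^{1/2}_{2\pi}}\le C\|v\|_{H^{1/2}(S(0,1))}$ that this passage requires.

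For a general Lipschitz domain, the sentence ``the same argument works'' is too quick. Step 1 relies on the explicit harmonic extension in the disk, which has no counterpart on a curved Lipschitz boundary. The simplest fix is to observe that \eqref{eq-in11} implies \eqref{eq-in1} by taking the infimum over extensions, just as \eqref{eq-in1} implies \eqref{eq-in11}. So it suffices to bound the trace map $H^1(\Omega)\to L^p(\partial\Omega)$, and that estimate localizes cleanly:
\begin{enumerate}
\item multiply $u$ by a partition of unity;
\item flatten each boundary patch $\{x_2>\gamma(x_1)\}$ by the bi-Lipschitz map $(x_1,x_2)\mapsto(x_1,x_2-\gamma(x_1))$, which preserves $H^1$;
\item use the half-plane estimate $\int_{\R}(1+|\xi|)|\widehat w(\xi,0)|^2\,d\xi\le C\|w\|_{H^1(\R^2_+)}^2$, obtained via the partial Fourier transform in $x_1$;
\item run your H\"older argument on $\R$;
\item return to arclength through $\mathrm{d}\sigma=\sqrt{1+\gamma'(x_1)^2}\,\mathrm{d}x_1$ with $\gamma'$ bounded.
\end{enumerate}
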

In the present study, we extend the right-hand side of equation \eqref{eq-in1} to the fractional Sobolev space $H^s(\Gamma)$ for $s \in (0,1)$, ($\Gamma$ is nonempty subset of $S(0,1)$), and derive a refined estimate for the constant $C_{\Omega,p}$. To obtain precise inequalities with explicit constants, we provide  upper bounds for the embedding constant in the specific case where the domain is the unit disk, $\Omega = D(0,1)$.\\
Specifically, we establish the following result:
\begin{proposition}\label{th:ineg-sob1} 
Let $2 \leq p < \infty$. The embedding $H^{1/2}(S(0,1)) \hookrightarrow L^p(S(0,1))$ is continuous. In particular, for all $f \in H^{1/2}(S(0,1))$, we have:
\begin{equation}\label{in-12}
\|f\|_{L^p(S(0,1))} \leq \lambda_p \|f\|_{H^{1/2}(S(0,1))},
\end{equation}
where the embedding constant $\lambda_p$ is given by:
\begin{equation}\label{in-13}
\lambda_p =\left( C_1\frac{p}{p-2} C_2^{p-2} p^{\frac{p-2}{2}} \right)^{\frac{1}{p}}, \quad \forall p > 2,
\end{equation}
and $C_1,C_2$ denote some universal constants.
\end{proposition}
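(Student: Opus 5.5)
Our plan is to work with Fourier series on $S(0,1)\simeq \R/2\pi\Z$ and to prove the inequality in the form $\|f\|_{L^p}^p \le \lambda_p^p \|f\|_{H^{1/2}}^p$. Write $f(\theta)=\sum_{n\in\Z}\hat f(n)e^{in\theta}$. First we relate the paper's trace norm to a Fourier quantity. Let $u$ be any $H^1(\Omega)$ extension of $f$. The harmonic extension $P f=\sum_n \hat f(n) r^{|n|}e^{in\theta}$ minimizes the Dirichlet energy, and
$$\|\nabla Pf\|_{L^2(\Omega)}^2=2\pi\sum_n |n|\,|\hat f(n)|^2 .$$
The $L^2$ part is controlled through the standard trace inequality on the disk, $\|u\|_{L^2(S)}^2\le C(\|u\|_{L^2(\Omega)}^2+\|\nabla u\|_{L^2(\Omega)}^2)$, which follows from $\int_S u^2 = \int_\Omega \mathrm{div}(x u^2)$. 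Together these give
$$\sum_n (1+|n|)|\hat f(n)|^2\le C_0\|f\|_{H^{1/2}}^2 .$$
By homogeneity we may normalize so that $A:=\sum_n(1+|n|)|\hat f(n)|^2= 1$.

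Next we estimate the distribution function $\mu(t)=\sigma(\{|f|>t\})$ using the layer-cake formula
$$\|f\|_p^p=p\int_0^\infty t^{p-1}\mu(t)\,dt.$$
For each level $t$ we split $f=f_{\le N}+f_{>N}$ at a frequency $N=N(t)$. By Cauchy--Schwarz,
$$\|f_{\le N}\|_\infty\le\sum_{|n|\le N}|\hat f(n)|\le \Big(\sum_{|n|\le N}\tfrac1{1+|n|}\Big)^{1/2}\le C\sqrt{\log(2+N)} .$$
We choose $N$ so that $C\sqrt{\log(2+N)}=t/2$, which means $N\approx e^{ct^2}$. Then Chebyshev gives
$$\mu(t)\le \sigma(|f_{>N}|>t/2)\le \frac{4}{t^2}\|f_{>N}\|_2^2\lesssim \frac{1}{t^2 N}\sum_{|n|>N}|n||\hat f(n)|^2 .$$
Used naively this yields only $\mu(t)\lesssim t^{-2}e^{-ct^2}$, i.e.\ subgaussian tails. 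That is Trudinger--Moser-like, and we then get
$$\int t^{p-1}\mu(t)\,dt\lesssim \int t^{p-3}e^{-ct^2}\,dt=\tfrac12 c^{-(p-2)/2}\,\Gamma\!\big(\tfrac{p-2}{2}\big).$$
Small $t$, namely $t\le t_0$, is handled with $\mu\le 2\pi$, and this piece contributes $t_0^p$.

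The shape of $\lambda_p$ in \eqref{in-13} tells us how to finish. The factor $p/(p-2)$ should come from the pole of $\Gamma((p-2)/2)$ at $p=2$, or equivalently from $\int_{t_0}^\infty t^{p-3}(\cdot)\,dt$. The factor $C_2^{p-2}p^{(p-2)/2}$ comes from the Stirling bound $\Gamma(\frac{p-2}{2})\lesssim \frac{2}{p-2}(p/2)^{(p-2)/2}$. Collecting these gives $\|f\|_p^p\le C_1\frac{p}{p-2}C_2^{p-2}p^{(p-2)/2}$ after undoing the normalization. Here $C_0$ and the Cauchy--Schwarz constant are absorbed into $C_1,C_2$. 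We use $A\ge 1$-type homogeneity: for general $f$ the bound scales as $A^{p/2}$, which is exactly the $p$-th power of the norm. The continuity of the embedding is then immediate.

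The main obstacle is step two: making the distribution estimate rigorous with a clean exponential. Chebyshev on the high part gives only $t^{-2}N^{-1}\cdot(\text{tail energy})\le t^{-2}N^{-1}$, which suffices since $N^{-1}=e^{-ct^2}$; the real issue is tracking constants. If it proves delicate, we will instead bound $\|f\|_{L^{2m}}$ for even integers $p=2m$ directly by expanding $|f|^{2m}$ via Fourier convolution and Young/Hausdorff--Young, obtaining $\|f\|_{2m}\lesssim\sqrt m\,A^{1/2}$. We would then interpolate (Riesz--Thorin or Hölder) to general $p>2$, which recovers the same $p^{(p-2)/2}$ growth.
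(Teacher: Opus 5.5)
Your proof is correct, but it works by a different mechanism than the paper's. The paper (Appendix C) never works at the endpoint $s=\frac12$. It proves the subcritical embedding $H^s_{2\pi}\hookrightarrow L^p_{2\pi}$ with $s=\frac12-\frac1p$, where the low-frequency bound is polynomial, $\|f_{1,N}\|_{L^\infty}\lesssim(1-2s)^{-1/2}N^{1/p}\|f\|_{H^s_{2\pi}}$. It therefore cuts at $N_t\sim t^p$, applies Chebyshev to the high part, and then uses Fubini, so each mode contributes $\int_0^{\alpha_k}t^{p-3}\,dt=\frac{\alpha_k^{p-2}}{p-2}$ with $\alpha_k^{p-2}\propto|k|^{2s}$. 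The factor $p^{(p-2)/2}$ there is $(1-2s)^{-(p-2)/2}=(p/2)^{(p-2)/2}$ in disguise, and the paper finishes with $\|\cdot\|_{H^s_{2\pi}}\le\|\cdot\|_{H^{1/2}_{2\pi}}$.

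You instead stay at $s=\frac12$ and accept the logarithmic bound $\sqrt{\log(2+N)}$. You cut at $N\sim e^{ct^2}$ and bound the tail crudely by $N^{-1}$ times the $\dot H^{1/2}$ energy. This gives a single $p$-independent subgaussian estimate $\mu(t)\lesssim t^{-2}e^{-ct^2}$, a non-sharp Trudinger--Moser inequality. Every $L^p$ bound then follows from one integration, with $p^{(p-2)/2}$ coming from Stirling applied to $\Gamma(\frac{p-2}{2})$.

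What each route buys:
\begin{itemize}
\item Yours gives more: exponential integrability of traces, which is what the exponential Robin term actually needs.
\item Yours also shows the factor $\frac{p}{p-2}$ is superfluous, since $\int_{t_0}^\infty t^{p-3}e^{-ct^2}\,dt$ stays bounded as $p\to2^+$.
\item The paper's gives the whole family of explicit subcritical embeddings, without Gamma-function asymptotics.
\end{itemize}

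Your first step also fills a gap the paper leaves implicit when it passes from $H^{1/2}(S(0,1))$ to $H^{1/2}_{2\pi}$. You bound $\sum_n(1+|n|)|\hat f(n)|^2$ by the trace-infimum norm, using the Dirichlet energy $2\pi\sum_n|n||\hat f(n)|^2$ of the harmonic extension together with the trace inequality.

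The remaining details are routine:
\begin{itemize}
\item Take $N=\lfloor e^{ct^2}\rfloor-2$ for $t\ge t_0$ (where $e^{ct_0^2}=2$), so that $N+1\ge e^{ct^2}/3$.
\item Absorb $2\pi t_0^p$ and $C_0^{p/2}$ into the form $C_1C_2^{p-2}$.
\item The case $p=2$ is just the trace inequality.
\end{itemize}
The Hausdorff--Young fallback is not needed.
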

\begin{remark}\label{rem21}
\begin{enumerate}
\item If $p\geq3$, we can take
$$\lambda_p =\left(\mcc^{p-2} p^{\frac{p-2}{2}} \right)^{\frac{1}{p}},$$
where $\mcc=2\max(C_1,C_2)$.
\item A rigorous proof of Proposition \ref{th:ineg-sob1} is provided in {\rm{\bf Appendix C}}.
\item The proof of Proposition \ref{th:ineg-sob1} is done in two step:\begin{enumerate}
\item First step: We prove $H^s_{2\pi}\hookrightarrow L^p_{2\pi}$, with $\frac{1}{p}+s=\frac{1}{2}$.
\item Second step: We use $\|.\|_{H^s_{2\pi}}\leq \|.\|_{H^{1/2}_{2\pi}},\;\forall 0<s<1/2$.
\end{enumerate}
\end{enumerate}
\end{remark}
Now, we define the following Sobolev space $V$ adapted to our case. Let $V$ the space defined by
 $$V  =\{ v \in H^1(\Omega)\;  {\rm such\,  that }\;  v = 0\;{\rm on }\; \Gamma_D \}.$$
The space $V$ is a Hilbert space when endowed with the  inner product 
 $$\langle u, v\rangle_V = \ds\int_\Omega \nabla u .\nabla v \dx,$$  and the associated norm  $$\|u\|_V = \left(\ds\int_\Omega |\nabla u|^2dx\right)^\frac{1}{2}.$$
Let $\tau$ denote the trace operator defined by
$$
 \begin{array}{llll}
 \tau &: V &\rightarrow &H^{1/2}(\Gamma_R)\\
  &u &\mapsto & u_{|\Gamma_R}.
 \end{array}
 $$
The linear operator  $\tau$ is continuous when the space $H^{1/2}(\Gamma_R)$ is equipped by the norm $\|.\|_{L^p(\Gamma_R)}$, ($2\leq p<\infty$). Also, we define the constant
   $$\beta_p=\inf\big\{ C>0;\;\|u\|_{L^p(\Gamma_R)}\leq C \| u\|_{V},\,\,\, \forall u \in V\big\}\in(0,\infty).
   $$
\begin{remark} 
Proposition~\ref{th:ineg-sob1} plays a crucial role in the analysis of the nonlinear exponential boundary condition on $\Gamma_R$. Indeed, the embedding
$$
H^{1/2}\bigl(S(0,1)\bigr)\hookrightarrow L^p\bigl(S(0,1)\bigr),
\qquad 2 \le p < \infty,
$$
together with the continuity of the trace operator
$\tau : V \to H^{1/2}(\Gamma_R)$, implies that for every $u \in V$ and every
$p \ge 2$,
$$
\|u\|_{L^p(\Gamma_R)}
\le \beta_p \, \|u\|_V,
$$
where the constant $\beta_p$ can be explicitly controlled in terms of $\lambda_p$.\\
This estimate allows us to control polynomial growth terms on the boundary. More importantly, it provides a fundamental tool for handling the exponential nonlinearity appearing in the Robin boundary condition:
$$
\varphi(x)\left(e^{\alpha u} - e^{-(1-\alpha)u}\right).
$$
\end{remark}
{\bf Dependence of the constant $\beta_p$ on $\lambda_p$:} Recall that the constant $\beta_p$ is defined by
$$
\beta_p
= \inf \left\{ C>0 \;:\;
\|u\|_{L^p(\Gamma)} \le C \|u\|_V,\ \forall u\in V \right\}.
$$
The continuity of the operators
$$
\tau:V \longrightarrow H^{1/2}(\Gamma_R),\;u\longmapsto u_{/\Gamma_R},$$
and
$$H^{1/2}(\Gamma_R)\hookrightarrow L^p(\Gamma_R),
\qquad 2 \le p < \infty,
$$
implies that, for all $p>2$
\begin{equation}\label{eq-LpEstimate}
\|u\|_{L^p(\Gamma_R)}\le \lambda_p \|u\|_{H^{1/2}(\Gamma_R)}
\le \lambda_p \|\tau\| \|u\|_{V}.
\end{equation}
Therefore, the constant $\beta_p$ satisfies the estimate
\begin{equation}\label{eq-lambda--beta}
\beta_p \le \lambda_p \mt,
\end{equation}
where:
\begin{itemize}
\item $\mt = \|\tau\|$ denotes the norm of the trace operator
$\tau : V \to H^{1/2}(\Gamma_R)$.\\
By the definition $\|.\|_{H^{1/2}}$, we get $\|\mt\|\leq 1$, which implies $\beta_p \le \lambda_p$.
\item $\lambda_p$ is the constant given explicitly in Proposition~\ref{th:ineg-sob1}.
\end{itemize}
$$$$
Before stating the main result of this paper, it is necessary to provide some definitions and conditions:
\begin{enumerate}
\item[$\bullet$] $M_0=\beta_2(\|\phi\|_{L^2(\Gamma_N)}+\|g\|_{L^2(\Gamma_R)})$.\vspace{2mm}
\item[$\bullet$]  We denote by $\Phi_{ad} $ the set of admissible coefficient:
\begin{equation}\label{def-adm}
\Phi_{ad }=\left\{ h \in C^0( \overline{\Gamma_R}  );\;\; 0\leq h \leq{\xi} \Lambda \right\}.
\end{equation} 
 where   the parameter $\xi \in (0,1)$, and
\begin{equation}\label{def-Lambda}
\Lambda= \bigg[ \beta_3^3M_0 +\beta_4^4M_0^2+(CM_0)^3R(CM_0)\bigg]^{-1}.
\end{equation}
Note that the constant $C$, and the entire function $R$ will be defined in the proof of Lemma \ref{lem4.1}.
\vspace{2mm}
\end{enumerate}
We are now in a position to state the main result of this article.
\begin{theorem}
Problem $(S)$ admits a solution $u\in V$. Moreover, this solution  is unique in the  closed ball $\overline{B_V(0,M_0)} =\{ v \in V \,:\,\|v\|_V \leq M_0\}$.
\end{theorem}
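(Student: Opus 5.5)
We will prove existence and uniqueness in $\overline{B_V(0,M_0)}$ by the iterative linearization scheme announced in the introduction, under the standing assumption $\varphi\in\Phi_{ad}$. Set $u_0=0$. Given $u_k\in V$, define $u_{k+1}\in V$ as the solution of the linear problem $(S_{k+1})$: $\Delta u_{k+1}=0$ in $\Omega$, $u_{k+1}=0$ on $\Gamma_D$, $\partial_n u_{k+1}=\phi$ on $\Gamma_N$, and
$$\partial_n u_{k+1}+\varphi f_\alpha(u_k)\,u_{k+1}=g \quad\text{on } \Gamma_R.$$
Its variational form is
$$a_k(u_{k+1},v)=\int_\Omega\nabla u_{k+1}\cdot\nabla v\dx+\int_{\Gamma_R}\varphi f_\alpha(u_k)u_{k+1}v\dsigma=\int_{\Gamma_N}\phi v\dsigma+\int_{\Gamma_R}g v\dsigma.$$
By (P1) and $\varphi\ge0$, the boundary term is nonnegative, so $a_k(v,v)\ge\|v\|_V^2$ and $a_k$ is coercive. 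To show that $a_k$ is continuous on $V$, we use (P3) together with H\"older's inequality on $\Gamma_R$ and Lemma~\ref{lem02} and Proposition~\ref{th:ineg-sob1}. Expanding $e^{\delta|u_k|}$ in series, each term $\int|u_k|^m|u||v|$ is bounded by $\beta_{m+2}^{m+2}\|u_k\|_V^m\|u\|_V\|v\|_V$. The series converges because $\lambda_p^p\lesssim \mcc^{p}p^{p/2}$ (Remark~\ref{rem21}), which gives $\sum_m \delta^m\beta_{m+2}^{m+2}\|u_k\|^m/m!<\infty$, since $m^{m/2}/m!$ decays superexponentially. This entire series is what defines $R$ and $C$. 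Lax--Milgram then gives a unique $u_{k+1}$. Testing with $v=u_{k+1}$ and dropping the nonnegative boundary term yields
$$\|u_{k+1}\|_V\le \beta_2(\|\phi\|_{L^2(\Gamma_N)}+\|g\|_{L^2(\Gamma_R)})=M_0,$$
so the whole sequence stays in $\overline{B_V(0,M_0)}$, whatever $\varphi\in\Phi_{ad}$ is.

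Next, we prove that $(u_k)$ is a contraction sequence. Subtracting the equations for $u_{k+1}$ and $u_k$ and testing with $w=u_{k+1}-u_k$ gives
$$\|w\|_V^2+\int_{\Gamma_R}\varphi f_\alpha(u_k)w^2\dsigma=-\int_{\Gamma_R}\varphi\big(f_\alpha(u_k)-f_\alpha(u_{k-1})\big)u_k\,w\dsigma.$$
We bound the right side via the mean value form $f_\alpha(a)-f_\alpha(b)=(a-b)\int_0^1 f_\alpha'(b+t(a-b))\,dt$. We need a growth bound for $f_\alpha'$ of the type $|f_\alpha'(r)|\le c_1+c_2|r|+|r|^2\,\tilde R(|r|)$ with $\tilde R$ entire; it follows from the power series of $f_\alpha$, whose coefficients are dominated by $\delta^{k+1}/(k+1)!$. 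H\"older's inequality with exponents $(3,3,3)$ and $(4,4,4,4)$, together with the series estimate above, then gives
$$\|w\|_V^2\le \|\varphi\|_\infty\big[\beta_3^3M_0+\beta_4^4M_0^2+(CM_0)^3R(CM_0)\big]\|u_k-u_{k-1}\|_V\|w\|_V,$$
that is, $\|u_{k+1}-u_k\|_V\le\xi\|u_k-u_{k-1}\|_V$ by the definition of $\Phi_{ad}$ and $\Lambda$. Hence $(u_k)$ is Cauchy in $V$ and converges to some $u\in\overline{B_V(0,M_0)}$.

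To pass to the limit, the trace embeddings give $u_k\to u$ in every $L^p(\Gamma_R)$. The same Lipschitz estimate for $f_\alpha$ on the bounded set shows $\varphi f_\alpha(u_k)u_{k+1}\to\varphi f_\alpha(u)u$ in $L^2(\Gamma_R)$, so $u$ satisfies the variational formulation of $(S)$. For uniqueness, take two solutions $u,\tilde u$ in the ball and test the difference of their variational formulations with $u-\tilde u$. Writing $f_\alpha(u)u-f_\alpha(\tilde u)\tilde u=f_\alpha(u)(u-\tilde u)+(f_\alpha(u)-f_\alpha(\tilde u))\tilde u$, dropping the nonnegative first term and estimating the second exactly as above gives $\|u-\tilde u\|_V\le\xi\|u-\tilde u\|_V$, hence $u=\tilde u$.

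The main obstacle is the contraction estimate. It requires a clean entire-function bound on $f_\alpha'$ and control of $\|u\|_{L^p(\Gamma_R)}^p$ uniformly in $p$, using the explicit growth $\lambda_p^p\sim\mcc^p p^{p/2}$, so that the exponential series is summable. The constants must come out exactly as in $\Lambda$. I would first establish the bound $\int_{\Gamma_R}|u|^m e^{\delta|u|}|v||w|\le (C\|u\|_V)^m R(C\|u\|_V)\|v\|_V\|w\|_V$ as a standalone lemma.
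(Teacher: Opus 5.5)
Your proposal is correct and follows essentially the paper's route: Lax--Milgram for $(S_k)$, the uniform bound by $M_0$, the contraction via H\"older with exponents $3$, $4$, $m+2$ together with the growth $\lambda_p^p\lesssim\mcc^{p}p^{p/2}$, and then passage to the limit. Your mean-value form of $f_\alpha(a)-f_\alpha(b)$ gives the same, or slightly better, coefficients than the paper's bound $|x^m-y^m|\le m|x-y|(|x|^{m-1}+|y|^{m-1})$. In the uniqueness step you are more careful than the paper: by keeping the sign and discarding the nonnegative term $\int_{\Gamma_R}\varphi f_\alpha(u)(u-\tilde u)^2\dsigma$, you reduce uniqueness exactly to the contraction estimate with constant $\xi$, whereas the paper retains this term (its $I$), which the bound $\varphi\le\xi\Lambda$ does not make small.
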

\section{Study of approximation system} \label{sect-approx}
To approximate the solution of the nonlinear problem $(S)$, we define the following iterative scheme. For each  $k \geq 1$, let $(S_k)$ be the linear problem defined by:
$$
(S_{k})\left\{\begin{array}{lll}
\displaystyle\Delta u = 0, \;&{\rm in}&\;\Omega,\\
\displaystyle u = 0, \;&{\rm on}&\;\Gamma_D,\\
\displaystyle\frac{\partial u}{\partial n} = \phi(x)  \;&{\rm on }&\;\Gamma_N,\\
\displaystyle\frac{\partial u}{\partial n}+\varphi(x) f_\alpha(u_{k-1})u = g(x)  \;&{\rm on}&\;\Gamma_R.
\end{array}\right.
$$
with the initial iteration $u_0 = 0$.\\

Referring to \cite{{CJ1}}, we have the following result
\begin{lemma}\label{th:ex_0}
For $k=1$, the linear problem $(S_{1})$ admits a unique solution $u_1 \in V$. Furthermore, this solution is bounded in the $V$-norm by $M_0$:
\begin{equation}
\|u_1\|_V \leq M_0,
\end{equation}
where $M_0 = \beta_2\left( \|\phi\|_{L^2(\Gamma_N)} + \|g\|_{L^2(\Gamma_R)} \right)$.
\end{lemma}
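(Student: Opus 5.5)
Since $u_0=0$ and $f_\alpha(0)=1$, the problem $(S_1)$ is the linear Robin problem with coefficient $\varphi$ on $\Gamma_R$:
$$\frac{\partial u}{\partial n}+\varphi(x)\,u=g(x)\quad\text{on }\Gamma_R .$$
I would rewrite it variationally. Multiply $\Delta u=0$ by $v\in V$ and integrate by parts using $v=0$ on $\Gamma_D$. The task is then to find $u\in V$ such that for all $v\in V$
$$a(u,v):=\int_\Omega \nabla u\cdot\nabla v\,dx+\int_{\Gamma_R}\varphi\, u\,v\,d\sigma=\int_{\Gamma_N}\phi\, v\,d\sigma+\int_{\Gamma_R} g\, v\,d\sigma=:L(v).$$
I assume $\varphi\ge 0$, which is the setting of \cite{CJ1} and the one needed later, since $\varphi\in\Phi_{ad}$.

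Existence and uniqueness follow from Lax--Milgram.

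\textbf{Continuity of $a$.} Since $\varphi\in C^0(\overline{\Gamma_R})$ is bounded, Cauchy--Schwarz on $\Gamma_R$ and the definition of $\beta_2$ give
$$|a(u,v)|\le \bigl(1+\|\varphi\|_\infty\beta_2^2\bigr)\|u\|_V\|v\|_V .$$

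\textbf{Coercivity of $a$.} Because $\varphi\ge 0$, we have $a(v,v)\ge \|v\|_V^2$. Note that $\|\cdot\|_V$ is a genuine norm on $V$ equivalent to the $H^1$ norm, by the Poincaré inequality for functions vanishing on $\Gamma_D$ with $\sigma(\Gamma_D)>0$. This is already built into the paper's choice of norm.

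\textbf{Continuity of $L$.} We have
$$|L(v)|\le \|\phi\|_{L^2(\Gamma_N)}\|v\|_{L^2(\Gamma_N)}+\|g\|_{L^2(\Gamma_R)}\|v\|_{L^2(\Gamma_R)}.$$
Both traces are bounded by $\beta_2\|v\|_V$. This needs the trace estimate on $\Gamma_N$ as well as on $\Gamma_R$. I would interpret $\beta_2$ as the $L^2$-trace constant on $\Gamma_N\cup\Gamma_R$, or note that the same bound $\beta_2\le\lambda_2$ from Proposition \ref{th:ineg-sob1} holds on any subset of $S(0,1)$. This gives $|L(v)|\le M_0\|v\|_V$, and Lax--Milgram then yields a unique $u_1\in V$.

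\textbf{The bound on $u_1$.} Test with $v=u_1$:
$$\|u_1\|_V^2\le \|u_1\|_V^2+\int_{\Gamma_R}\varphi\, u_1^2\,d\sigma = L(u_1)\le M_0\|u_1\|_V .$$
Dividing by $\|u_1\|_V$ when it is nonzero gives $\|u_1\|_V\le M_0$.

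The only delicate point is the constant bookkeeping: the $\Gamma_N$ trace must be controlled by the same constant $\beta_2$ that is defined through $\Gamma_R$. I would resolve this by taking the trace constant over the whole boundary, which Proposition \ref{th:ineg-sob1} and $\|\tilde\tau_0\|\le1$ permit.
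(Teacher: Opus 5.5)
Your proposal is correct and is essentially the paper's argument. The paper only cites \cite{CJ1} here, but its proof of Proposition~\ref{th:ex_k} covers $k=1$, since $f_\alpha(u_0)=f_\alpha(0)=1$, and it is exactly your argument: Lax--Milgram with coercivity from $\varphi\ge 0$, then testing with $u_1$ to get $\|u_1\|_V^2\le L(u_1)\le M_0\|u_1\|_V$.

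You are also right that the $\Gamma_N$ trace must be controlled by $\beta_2$. The paper uses this silently: its first definition of $\beta_p$ is over $\Gamma_R$, while its later restatement is over the whole boundary. Defining $\beta_2$ over $\Gamma_N\cup\Gamma_R$ fixes this. Justify finiteness of that constant by the trace theorem together with the Poincar\'e inequality on $V$, not by $\|\tilde\tau_0\|\le 1$. That bound is relative to $\|\cdot\|_{H^1}$, not $\|\cdot\|_V$, so $\beta_2\le\lambda_2$ does not follow from it.
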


In what follows, we state  the main result of this section. Precisely, in this proposition we establish the well-posed-ness of the iterative sequence and the uniform bounded-ness of its solutions.

\begin{proposition}\label{th:ex_k}
For each $k \in \mathbb{N}$, the linear problem $(S_{k})$ admits a unique solution $u_k \in V$. Furthermore, the sequence $(u_k)_{k \in \mathbb{N}}$ is uniformly bounded in $V$, satisfying:
\begin{equation}
\|u_k\|_V \leq M_0, \quad \forall k \in \mathbb{N},
\end{equation}
where $M_0$ is the constant defined in Lemma \ref{th:ex_0}.
\end{proposition}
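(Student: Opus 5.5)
The plan is induction on $k$, with Lemma \ref{th:ex_0} as the base case $k=1$; the case $k=0$ is trivial since $u_0=0$. Assume $u_{k-1}\in V$ is already constructed with $\|u_{k-1}\|_V\le M_0$. Set $q_{k}:=\varphi\, f_\alpha(u_{k-1})$ on $\Gamma_R$. By {\bf(P1)}, $q_k\ge 0$ as soon as $\varphi\ge 0$; in what follows we take $\varphi\in\Phi_{ad}$, as the definition of $\Lambda$ suggests. The variational formulation of $(S_k)$ is: find $u\in V$ such that
$$a_k(u,v):=\int_\Omega \nabla u\cdot\nabla v\dx+\int_{\Gamma_R} q_k\,u\,v\dsigma=\int_{\Gamma_N}\phi v\dsigma+\int_{\Gamma_R} g v\dsigma=:L(v),\qquad \forall v\in V.$$

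Coercivity is immediate: $a_k(v,v)\ge \|v\|_V^2$ because $q_k\ge0$. The form $L$ is continuous, with $|L(v)|\le \beta_2(\|\phi\|_{L^2(\Gamma_N)}+\|g\|_{L^2(\Gamma_R)})\|v\|_V=M_0\|v\|_V$. Here we use the trace bound $\|v\|_{L^2(\Gamma)}\le\beta_2\|v\|_V$ on $\Gamma_N$ as well; if needed, we redefine $\beta_2$ over $\Gamma_N\cup\Gamma_R$, which is how $M_0$ is used in Lemma \ref{th:ex_0}.

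Continuity of $a_k$ is the real point, since $f_\alpha(u_{k-1})$ is unbounded. By {\bf(P3)},
$$|q_k|\le 2\delta\,\|\varphi\|_\infty\bigl(1+\delta|u_{k-1}|e^{\delta|u_{k-1}|}\bigr).$$
The bilinear term is then bounded via Hölder:
$$\int_{\Gamma_R}|u_{k-1}|e^{\delta|u_{k-1}|}|u||v|\dsigma\le \|u_{k-1}e^{\delta|u_{k-1}|}\|_{L^2(\Gamma_R)}\,\|u\|_{L^4(\Gamma_R)}\|v\|_{L^4(\Gamma_R)}.$$
The last two factors are at most $\beta_4^2\|u\|_V\|v\|_V$. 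For the first factor, we expand the exponential as a series, $\|u_{k-1}e^{\delta|u_{k-1}|}\|_{L^2}\le\sum_j \frac{\delta^j}{j!}\|u_{k-1}\|_{L^{2(j+1)}}^{j+1}\le \sum_j\frac{\delta^j}{j!}\beta_{2j+2}^{j+1}M_0^{j+1}$. Using Remark \ref{rem21} and \eqref{eq-lambda--beta}, $\beta_p^p\le \mcc^{p-2}p^{(p-2)/2}$, so $\beta_{2j+2}^{j+1}\lesssim \mcc^{j}(2j+2)^{j/2}$. Since $(2j+2)^{j/2}/j!$ decays like $(Ce/\sqrt{j})^j$, the series converges and defines an entire function of $M_0$. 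This is presumably the entire function $R$ and constant $C$ of Lemma \ref{lem4.1}, and we will estimate it in the same way. Lax–Milgram then yields a unique $u_k\in V$.

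For the uniform bound, we test with $v=u_k$ and drop the nonnegative boundary term: $\|u_k\|_V^2\le a_k(u_k,u_k)=L(u_k)\le M_0\|u_k\|_V$, hence $\|u_k\|_V\le M_0$. This closes the induction. The estimate does not even use the smallness of $\varphi$, only $\varphi\ge0$. The main obstacle is the continuity step, namely making the convergence of the exponential series rigorous with the explicit growth of $\lambda_p$ from Proposition \ref{th:ineg-sob1}. Everything else is standard.
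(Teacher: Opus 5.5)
Your proof is correct and follows the paper's route. It uses Lax--Milgram, with coercivity from $\varphi f_\alpha(u_{k-1})\ge 0$ and continuity of the boundary term via {\bf(P3)}, H\"older, and a power-series expansion of the exponential controlled by the growth $\lambda_p^p=\mcc^{p-2}p^{(p-2)/2}$ (for $p\ge 3$); the bound $\|u_k\|_V\le M_0$ then comes from testing with $u_k$ and dropping the nonnegative boundary term. The differences are minor: you use H\"older exponents $(2,4,4)$ and Minkowski on $\|u_{k-1}e^{\delta|u_{k-1}|}\|_{L^2(\Gamma_R)}$ where the paper uses $(3,3,3)$ and expands $\int_{\Gamma_R}|u_{k-1}|^3e^{3\delta|u_{k-1}|}\dsigma$ (so your series is not literally the function $R$ of Lemma \ref{lem4.1}, which does not matter), and you make explicit the induction on $k$ that the paper uses only tacitly when it invokes $\|u_{k-1}\|_V\le M_0$ before proving it.
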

{\bf Proof.} The proof relies on the variational formulation of  problem $(S_k)$ and the application of the  Lax-Milgram Theorem.\\ 
The problem $(S_{k})$ is formally equivalent to the following variational problem:
$$
(\tilde{S}_k) \left\{
\begin{array}{llll}
\mbox{Find } u \in V \mbox{ such that}\\\\
B_{\alpha,k}(u,v) =L(v) \,\, \forall  v \in V ,
\end{array}
\right.
$$
where
$$
B_{\alpha,k}(u,v) = \ds\int_{\Omega} \nabla u \cdot \nabla v \dx + \int_{\Gamma_R} \varphi(x) f_\alpha(u_{k-1}) u v \dsigma,
$$
and
$$
L(v) = \ds\int_{\Gamma_N} \phi v \dsigma + \int_{\Gamma_R} g v \dsigma.
$$

\noindent To establish the existence of a unique solution to  this variational problem, we apply the Lax-Milgram theorem. This requires verifying that the bilinear form $B_{\alpha,k}$ is both continuous and coercive on $V\times V$, and that the linear form $L$ is continuous on $V$.\\

\noindent\textbf{ Coercivity of $B_{\alpha,k}$:} For   $u \in V$, we have
	$$
	\begin{array}{llll}
	B_{\alpha,k}(u,u) &=& \ds\int_{\Omega} |\nabla u|^2 \dx + \int_{\Gamma_R} \varphi(x) f_\alpha(u_{k-1}) u^2 \dsigma \\
	&=&  \|  u\|_V^2 + \ds\int_{\Gamma_R} \varphi(x) f_\alpha(u_{k-1}) u^2 \dsigma.
	\end{array}
	$$
Since the positivity of the functions $\varphi$ and $f_\alpha$, we get 
	$$
	B_{\alpha,k}(u,u)  \geq \|u\|_{V}^2.
	$$
\noindent\textbf{Continuity of $B_{\alpha,k}$:} Let $(u,v)\in V^2$.\\ 
By applying the triangle inequality and the $L^\infty$ bound on $\varphi$, we have:
$$
\begin{array}{lllllllll}
|B_{\alpha,k}(u,v)| &\leq &\|\nabla u\|_{L^2(\Omega)} \|\nabla v\|_{L^2(\Omega)} + \|\varphi\|_{L^\infty({\Gamma_R})} \ds\int_{\Gamma_R} f_\alpha(u_{k-1})|u| |v| \dsigma.
\end{array}
$$
Using property {\bf(P3)}, we obtain
$$
\begin{array}{lllllllll}
|B_{\alpha,k}(u,v)| &\leq &\|  u\|_{V} \|  v\|_{V} + 2\delta\|\varphi\|_{L^\infty({\Gamma_R})} \ds\int_{\Gamma_R}  (1+ \delta|u_{k-1}|e^{\delta|u_{k-1}|})|u| |v| \dsigma\\
&\leq &\|u\|_{V} \|v\|_{V} + 2\delta\|\varphi\|_{L^\infty({\Gamma_R})}\bigg[\|u\|_{L^2(\Gamma_R)}\|v\|_{L^2(\Gamma_R)} + \ds\int_{\Gamma_R}  \delta|u_{k-1}||u| |v| e^{\delta|u_{k-1}|} \dsigma\bigg].\\
\end{array}
$$
Applying H\"older's inequality to the integral term yields:
$$
\begin{array}{lllllllll}
|B_{\alpha,k}(u,v)|&\leq &\|u\|_{V} \|v\|_{V} + 2\delta\|\varphi\|_{L^\infty({\Gamma_R})}\bigg[\|u\|_{L^2(\Gamma_R)}\|v\|_{L^2(\Gamma_R)} + \\
& &\|u\|_{L^3(\Gamma_R)} \|v\|_{L^3(\Gamma_R)} \bigg( \ds\int_{\Gamma_R}  (\delta|u_{k-1}|)^3  e^{3\delta|u_{k-1}|} \dsigma \bigg)^{1/3}\bigg].\\
\end{array}
$$
Let
$$
I_k = \ds\int_{\Gamma_R}   |u_{k-1}|^3  e^{3\delta|u_{k-1}|} \dsigma
$$
Using the power series expansion of the exponential function, we have
$$
\begin{array}{lllllllll}
I_k &= &\ds \int_{\Gamma_R}  |u_{k-1}|^{3} \sum_{n=0}^{\infty}\frac{(3\delta)^n}{n!}  |u_{k-1}|^{n} \dsigma\\
&=&\ds \sum_{n=0}^{\infty} \int_{\Gamma_R} \frac{(3\delta)^n}{n!}  |u_{k-1}|^{3+n} \dsigma\\
&=&\ds \sum_{n=0}^{\infty}  \frac{(3\delta)^n}{n!}  \|u_{k-1}\|_{L^{3+n}(\Gamma_R)}^{3+n}.
\end{array}
$$
Using the inequalities (\ref{eq-LpEstimate}, \ref{eq-lambda--beta}) and the uniform bound  $\|u_{k-1}\|_V \leq M_0$, we obtain

$$
\|u_{k-1}\|_{L^{3+n}(\Gamma_R)}^{3+n} \leq\lambda_{3+n}^{3+n}  {\mt}^{3+n} \|u_{k-1}\|_V^{3+n} \leq \lambda_{3+n}^{3+n} {\mt}^{3+n}M_0^{3+n}.
$$
By using the fact $\mt\leq1$, we get
$$
\begin{array}{llllll}
I_k &\leq& \ds\sum_{n=0}^{\infty} \frac{(3\delta)^n}{n!} \lambda_{3+n}^{3+n}M_0^{3+n}.
\end{array}$$
By using the explicit  definition of $\lambda_{3+n}$, we get
$$I_k \leq (CM_0)^3 \ds\sum^{\infty}_{n=0} (3\delta CM_0)^n a_n,$$
where $$a_n = \ds\frac{(n+3)^{\frac{n+1}{2}}}{n!}= \ds\frac{(n+3)^{\frac{n+1}{2}}}{n^{\frac{n+1}{2}}}\;\ds\frac{n^{\frac{n+1}{2}}}{n!}= \ds\left( 1 + \frac{3}{n} \right)^{\frac{n+1}{2}}\;\ds\frac{n^{\frac{n+1}{2}}}{n!}.$$ \\
To analyze the convergence of the series, we use the approximation $\ds\left( 1 + \frac{3}{n} \right)^{\frac{n+1}{2}} \thicksim e^{\frac{3}{2}}$, and the Stirling's formula:    $
	n! \sim n^{n+1/2}e^{-n}(2\pi)^{1/2}$ implies that
	$$
	a_n	\sim \ds\frac{e^n(2\pi)^{-1/2}}{n^{n/2}}.
	$$
	Let $ d_0=\max\Big(a_0,\ds\sup_{n\geq1}\frac{a_n}{\frac{e^n(2\pi)^{-1/2}}{n^{n/2}}}\Big)\in(0,\infty)$. Clearly, the series $\ds\sum_{n\geq1}\frac{e^n(2\pi)^{-1/2}}{n^{n/2}}(3\delta CM_0)^{n}$ converges, which implies that
	$$
	\begin{array}{llll}
		I_k	& \leq d_0 (CM_0)^3\Big[1+\ds\sum^{\infty}_{n=1} (3\delta C M_0)^n\frac{e^n(2\pi)^{-1/2}}{n^{n/2}}\Big]:=r_0<\infty.
	\end{array}
	$$
Finally
	$$\begin{array}{lcl}
	|B_{\alpha,k}(u,v)|&\leq&\left[ 1+2\delta\|\varphi\|_{L^\infty({\Gamma_R})}\left(\beta_2^2+\beta_3^2I_k^{1/3}\right)\right]\|u\|_V \|v\|_V  \vspace{2mm}\\
&\leq&\left[ 1+2\delta\varepsilon_0\left(\beta_2^2+\beta_3^2r_0^{1/3}\right)\right]\|u\|_V \|v\|_V,
\end{array}$$
This establishes the continuity of the bilinear form $B_{\alpha,k}$ on $V \times V$.\\

\noindent{\textbf{Continuity of $L$:}} The continuity of the linear functional $L$ follows from the Cauchy-Schwarz inequality and the continuity of the trace operator.\\
Precisely, for $u\in V$, we have
	$$
	\begin{array}{llll}
	|L(v)| &=& |\ds\int_{\Gamma_N} \phi v \dsigma + \int_{\Gamma_R} g v \dsigma| \vspace{2mm}\\
	&\leq& \|\phi\|_{L^2(\Gamma_N)}\|v\|_{L^2(\Gamma_N)} + \|g\|_{L^2(\Gamma_R)}\|v\|_{L^2(\Gamma_R)}\vspace{2mm}\\
	&\leq&\beta_2\left(\|\phi\|_{L^2(\Gamma_N)}+\| g\|_{L^2(\Gamma_R)}  \right)\|v\|_V.
	\end{array}
$$
 This confirms that $L$ is a bounded linear functional on $V$.\\

\noindent\textbf{Conclusion:} According to the Lax-Milgram Theorem, problem $(\tilde{S}_k)$ has a unique solution $u_k\in V$. Which implies that problem $(S_k)$ admits a unique solution  $u_k \in V$ for each $k\geq 1$.\\

 It remains to be establish the uniform bound-ness of the sequence $(u_k)_{k\in \N}$.\\
 For this, let $k\geq 1$. We have 
 $$ B_{\alpha,k}(u_k,u_k) = L(u_k),$$\\
 then
$$
\begin{array}{llll}
\ds\int_{\Omega} |\nabla u_k|^2 \dx + \int_{\Gamma_R} \varphi(x) f_\alpha(u_{k-1}) u_k^2 \dsigma = \ds\int_{\Gamma_N} \phi u_k \dsigma + \int_{\Gamma_R} g u_k \dsigma.
\end{array}
$$
Since the positivity of  functions $f_\alpha$ and $\varphi$, we get
$$
\|\nabla u_k\| ^2_{L^2(\Omega)} \leq \beta_2(\|\phi\|_{L^2(\Gamma_N)}+\|g\|_{L^2(\Gamma_R)})\|u_k\|_V
$$
then
$$
\|u_k\|_V  \leq M_0.
$$
\section{Convergence result}\label{sect-cnvgce}
The purpose of this section  is to prove that the sequence of   solutions $(u_k)_{k\geq1}$ of problems $(S_{k})_{k\geq1}$ converges to a solution of the original problem $(S)$. \\

For each $k \in \mathbb{N}$, let $u_k$ denote the unique solution of the problem
$$
(S_{k})\left\{\begin{array}{lllll}
\displaystyle\Delta u=  0, \;&{\rm in}&\;\Omega,\\
\displaystyle \,u=   0, \;&{\rm on}&\;\Gamma_D,\\
\displaystyle\frac{\partial u}{\partial n}=  \phi(x) \;&{\rm on }&\;\Gamma_N,\\
\displaystyle\frac{\partial u}{\partial n}+\varphi(x)  f_\alpha(u_{k-1})u =  g(x) \;&{\rm on}&\;\Gamma_R.
\end{array}\right.
$$
with initial iteration $u_0 =0$.\\
 
\noindent Let $w_k := u_{k+1} - u_k$, where $ u_{k+1}$ and $ u_k$ are respectively the solutions of  problems $(S_{k+1})$ and $(S_{k})$.\\ We obtain that $w_k$ solves  
 
$$
\left\{\begin{array}{lllll}
\displaystyle\Delta w_k&=&  0, \;&{\rm in}&\;\Omega,\\
\displaystyle \,w_k&=&   0, \;&{\rm on}&\;\Gamma_D,\\
\displaystyle\frac{\partial w_k}{\partial n}&=& 0 \;&{\rm on }&\;\Gamma_N,\\
\vspace{3mm}
\displaystyle\frac{\partial w_k}{\partial n}&+&\varphi(x)  \big[ f_{\alpha}(u_{k})u_{k+1} - f_{\alpha}(u_{k-1})u_{k}\big] =  0 \;&{\rm on}&\;\Gamma_R.
\end{array}\right.
$$
Using Green formula's, we obtain
$$
\ds \int_\Omega \nabla w_k  \nabla v - \ds \int_{\Gamma_R} \ds\frac{\partial w_k}{\partial n} v  = 0 \hspace{3mm} \forall v \in V.
$$
Choosing  $v =w_k$, it follows that
$$
 \|w_k \|_V^2  + \ds \int_{\Gamma_R} \varphi(x) \big[ f_{\alpha}(u_{k})u_{k+1} - f_{\alpha}(u_{k-1})u_{k}\big]w_k = 0.
$$
Replacing $u_{k+1} $ by $w_{k}+u_k $, we obtain
$$
 \|w_k \|_V^2  + \ds \int_{\Gamma_R} \varphi(x) \big[ f_{\alpha}(u_{k})w_k +  \big(f_{\alpha}(u_{k}) -f_{\alpha}(u_{k-1})\big)u_{k}\big]w_k = 0.
$$
Hence
$$
 \|w_k \|_V^2  + \ds \int_{\Gamma_R} \varphi(x) f_{\alpha}(u_{k})w_k^2+ \ds \int_{\Gamma_R} \varphi(x) \big[  f_{\alpha}(u_{k}) -f_{\alpha}(u_{k-1})\big] u_{k}w_k = 0.
$$
Since $\varphi(x) \geq 0$ for all $x \in \Gamma_{R}$, we deduce
\begin{equation}\label{eq1}
 \|w_k \|_V^2  \,\leq \,\ds \int_{\Gamma_R} \varphi(x) \big|f_{\alpha}(u_{k}) -f_{\alpha}(u_{k-1})\big|.|u_{k}|.|w_k|d\sigma.
\end{equation}
The Taylor expand of $f_\alpha(x)$ yields
$$
\begin{array}{lll}
f_\alpha(x) & = & \ds\frac{1}{x}\bigg( \sum_{m=0}^\infty \frac{(\alpha x)^m}{m!}- \sum_{n=0}^\infty (-1)^m\frac{(1-\alpha)^m x^m }{m!}\bigg)\vspace{2mm}\\
& = & \ds\frac{1}{x}\bigg( x+\sum_{m=2}^\infty b_m(\alpha) x^m\bigg)\vspace{2mm}\\
&=& 1 +\ds\sum_{m\geq 1} b_{m+1}(\alpha) x^m,
\end{array}
$$
where
$$\displaystyle b_{m}(\alpha) = \frac{\alpha^m - (-1)^m (1-\alpha)^m}{m!}.$$
 Consequently,
$$
f_\alpha(u_{k}) -f_\alpha(u_{k-1}) = b_{2}(\alpha)  w_{k-1} + b_{3}(\alpha)(u_k+u_{k-1})w_{k-1} + \ds\sum_{m\geq 3} b_{m+1}(\alpha)(u_k^m-u_{k-1}^m).$$
Using the elementary inequality
\begin{equation}\label{*}|x^m - y^m| \leq m |x-y| (|x|^{m-1} + |y|^{m-1})\,\, \forall x, y \in \R,\,\forall m \in \N,\end{equation}
we obtain
\begin{align}\label{eq2}
|f_\alpha(u_{k})-f_\alpha(u_{k-1})| &\leq  |b_{2}(\alpha)| | w_{k-1}| + |b_{3}(\alpha)| \left(|u_k | + |u_{k-1}|\right)| w_{k-1}| +\notag\\
& 
 \ds\sum_{m\geq 3} |b_{m+1}(\alpha)| m \left( |u_k |^{m-1} + |u_{k-1}|^{m-1}\right) | w_{k-1}|.
\end{align}
\begin{lemma}[Contraction estimate]\label{lem4.1}
There exists a constant $K \in (0,1)$ such that $(w_k)$ satisfies the following contraction estimate:
$$
\|w_{k}\|_V \leq K \|w_{k-1}\|_V,\;\forall k\in\N.
$$
\end{lemma}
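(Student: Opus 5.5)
We start from the energy inequality \eqref{eq1} and the pointwise bound \eqref{eq2}, and estimate the boundary integral term by term. We use H\"older's inequality on $\Gamma_R$ together with the embedding constants $\beta_p$ from \eqref{eq-LpEstimate}--\eqref{eq-lambda--beta} and the uniform bound $\|u_j\|_V\le M_0$ from Proposition \ref{th:ex_k}. First we pull out $\|\varphi\|_{L^\infty(\Gamma_R)}\le \xi\Lambda$, since $\varphi\in\Phi_{ad}$. Then:
\begin{itemize}
\item The term $|b_2(\alpha)|\,|w_{k-1}|\,|u_k|\,|w_k|$ is handled by H\"older with three $L^3$ factors. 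Since $|b_2(\alpha)|\le 1$, it gives at most $\beta_3^3 M_0\|w_{k-1}\|_V\|w_k\|_V$.
\item The term $|b_3(\alpha)|(|u_k|+|u_{k-1}|)|u_k|\,|w_{k-1}|\,|w_k|$ is handled with four $L^4$ factors. Using $|b_3|\le 1/2$, it gives at most $\beta_4^4M_0^2\|w_{k-1}\|_V\|w_k\|_V$.
\end{itemize}

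For the tail $\sum_{m\ge3}m|b_{m+1}(\alpha)|(|u_k|^{m-1}+|u_{k-1}|^{m-1})|u_k|\,|w_{k-1}|\,|w_k|$, we apply H\"older with exponents $(p,p,p/(m),\dots)$. The cleanest choice is $p=m+2$: we bound $\int |u_j|^{m-1}|u_k||w_{k-1}||w_k|$ by $\|u_j\|^{m-1}_{L^{m+2}}\|u_k\|_{L^{m+2}}\|w_{k-1}\|_{L^{m+2}}\|w_k\|_{L^{m+2}}$, which is at most $\beta_{m+2}^{m+2}M_0^m\|w_{k-1}\|_V\|w_k\|_V$. We then insert the explicit constant $\lambda_{m+2}^{m+2}\le \mcc^{m}(m+2)^{m/2}$ from Remark \ref{rem21}, together with $\beta_p\le\lambda_p$. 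We also use $|b_{m+1}(\alpha)|\le 2\delta^{m+1}/(m+1)!\le 2/(m+1)!$.

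With these bounds the tail is dominated by $(CM_0)^3R(CM_0)\|w_{k-1}\|_V\|w_k\|_V$. Here we take $C=\mcc$ (possibly enlarged by harmless numerical factors), and $R(z)=\sum_{m\ge3}c_m z^{m-3}$ with $c_m\approx 4m\,(m+2)^{m/2}/(m+1)!$. This is where we define $C$ and $R$, as promised before \eqref{def-Lambda}.

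The main obstacle is showing that $R$ is entire, i.e.\ that $c_m^{1/m}\to0$. We would argue via Stirling exactly as in the proof of Proposition \ref{th:ex_k}. We have $(m+2)^{m/2}\sim e\,m^{m/2}$ and $(m+1)!\gtrsim m^{m+1}e^{-m}$, so $c_m\lesssim e^m m^{-m/2}$ and $c_m^{1/m}\sim e/\sqrt m\to0$. The indexing of powers must be tracked carefully so that the prefactor comes out as exactly $(CM_0)^3$. If it does not, we absorb the discrepancy into $R$ (still entire) or into $C$.

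Collecting the three pieces gives
$$\|w_k\|_V^2\le \xi\Lambda\big[\beta_3^3M_0+\beta_4^4M_0^2+(CM_0)^3R(CM_0)\big]\|w_{k-1}\|_V\|w_k\|_V=\xi\|w_{k-1}\|_V\|w_k\|_V,$$
using the definition \eqref{def-Lambda}. Dividing by $\|w_k\|_V$ (the case $w_k=0$ is trivial) yields the lemma with $K=\xi\in(0,1)$. The case $k=1$ uses $u_0=0$ and is covered by the same estimates, since then $\|u_0\|_V=0\le M_0$.
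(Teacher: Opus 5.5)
Your proposal is correct and follows the paper's proof essentially step for step. It uses the same split of $|f_\alpha(u_k)-f_\alpha(u_{k-1})|$ into the $b_2$, $b_3$ and tail terms, H\"older with exponents $3$, $4$ and $m+2$, the bounds $\|u_j\|_V\le M_0$ and $\beta_p\le\lambda_p$ with the explicit $\lambda_p$, and an entire function $R$ justified by Stirling as in Appendix B; it ends with $K=\xi$ from $\varphi\in\Phi_{ad}$. The only difference is cosmetic: your tail coefficient $4m(m+2)^{m/2}/(m+1)!$ is the paper's $q_m=4(m+2)^{(m+2)/2}/(m+1)!$ before the bound $m\le m+2$, which changes nothing.
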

\noindent{\bf Proof.} From inequalities \eqref{eq1} and \eqref{eq2}, we deduce
\begin{align}\label{eq:estim0}
\|w_k \|_V^2  \leq &\,\|\varphi\|_{L^\infty(\Gamma_R)}\ds \int_{\Gamma_R}   \big|f_\alpha(u_{k}) -f_\alpha(u_{k-1})\big|  |w_k | |u_{k}|d\sigma  \notag \\   %
\leq &\,\|\varphi\|_{L^\infty(\Gamma_R)}\big(J_{k,1}+J_{k,2}+J_{k,3}\big),                %
\end{align}
where
$$
\begin{array}{lcl}
J_{k,1} &= &\ds\int_{\Gamma_R}  |b_2(\alpha)|  |w_{k-1}| |w_k | |u_{k}|d\sigma \\
J_{k,2} &= &\ds\int_{\Gamma_R}  |b_3(\alpha)|  |w_{k-1}| \big(|u_k | + |u_{k-1}| \big) |w_{k }||u_{k}|d\sigma \\
J_{k,3} &= &\ds\sum_{m\geq3}|b_{m+1}(\alpha)| m \int_{\Gamma_R}\big(|u_k |^{m-1} + |u_{k-1}|^{m-1}\big) |w_{k-1}||w_k| |u_{k}|d\sigma.
\end{array}
$$
As $\alpha \in (0,1)$, then
$$ \displaystyle |b_{m}(\alpha)| = |\frac{\alpha^m - (-1)^m (1-\alpha)^m}{m!}|\,\leq\, \ds\frac{2}{m!},\;\forall m \in \N.$$
\noindent $\bullet$ \textbf{Estimation of $J_{k,1}$:} Applying H\"older's inequality on $\Gamma_R$ with exponents $(3,3,3)$, we obtain:
\begin{align}\label{eq:estimJ1}
\begin{array}{llll}
J_{k,1} &\leq&  |b_2(\alpha)| \|w_{k-1}\|_{L^3(\Gamma_R)} \|w_k \|_{L^3(\Gamma_R)} \|u_{k}\|_{L^3(\Gamma_R)}\vspace{2mm}\\
&\leq& |b_2(\alpha)|\beta_3^3M_0\|w_{k-1}\|_V\|w_{k}\|_V.\vspace{2mm}\\
&\leq& \beta_3^3M_0\|w_{k-1}\|_V\|w_{k}\|_V,
\end{array}
\end{align}
where we have used  the uniform bound $\|u_k\|_V\le M_0$ and the estimate $ |b_2(\alpha)|\le 1$.

\noindent $\bullet$ \textbf{Estimation of $J_{k,2}$:} Applying H\"older's inequality on $\Gamma_R$ with exponents $(4,4,4,4)$ together with the trace embedding, we obtain
\begin{align}\label{eq:estimJ2}
\begin{array}{llll}
J_{k,2} &\leq&  |b_3(\alpha)| \|w_{k-1}\|_{L^4(\Gamma_R)} \|w_k \|_{L^4(\Gamma_R)} (\|u_{k}\|_{L^4(\Gamma_R)} +\|u_{k-1}\|_{L^4(\Gamma_R)})\|u_{k}\|_{L^4(\Gamma_R)} \vspace{2mm}\\
&\leq& 2|b_3(\alpha)|\beta_4^4M_0^2\|w_{k-1}\|_V\|w_{k}\|_V\vspace{2mm}\\
&\leq& \beta_4^4M_0^2\|w_{k-1}\|_V\|w_{k}\|_V.
\end{array}
\end{align}
$\bullet$ \textbf{Estimation of $J_{k,3}$:} Applying H\"older's inequality on $\Gamma_R$ with exponents $(m+2,m+2,m+2)$
We have
$$
\begin{array}{llll}
J_{k,3} &\leq& \ds\sum_{m\geq 3} |b_{m+1}(\alpha)| m\bigg[ \ds \int_{\Gamma_R}     |u_k |^{m}  | w_{k}| | w_{k-1}| +  \ds \int_{\Gamma_R}    | u_{k-1}|^{m-1}  |u_k |  | w_{k}| | w_{k-1}| \bigg]
\vspace{2mm}\\
&\leq& \ds\sum_{m\geq 3} |b_{m+1}(\alpha)| m\bigg[  \|u_{k}\|_{L^{m+2}(\Gamma_R)}^m\|w_{k-1}\|_{L^{m+2}(\Gamma_R)} \|w_k \|_{L^{m+2}(\Gamma_R)}   \\
& &+   \|u_{k-1}\|_{L^{m+2}(\Gamma_R)}^{m-1}\|u_{k}\|_{L^{m+2}(\Gamma_R)}\|w_{k-1}\|_{L^{m+2}(\Gamma_R)} \|w_k \|_{L^{m+2}(\Gamma_R)} \bigg].
\end{array}
$$
Using the trace embedding and the uniform bound $\|u_k\|_V \leq M_0$, we get
$$
\begin{array}{llll}
J_{k,3} 
&\leq&  \ds\sum_{m\geq 3} |b_{m+1}(\alpha)| m\bigg[  \beta_{m+2}^{m+2} M_0^m +  \beta_{m+2}^{m+2}M_0^m\bigg]\|w_{k-1}\|_V\|w_{k}\|_V
\vspace{2mm}\\
&\leq&  \ds\sum_{m\geq 3} 2 |b_{m+1}(\alpha)| m   \beta_{m+2}^{m+2} M_0^m   \|w_{k-1}\|_V\|w_{k}\|_V.
\end{array}
$$
Since $|b_{m}(\alpha)|   \leq \ds\frac{2}{m!}$, $\mt\leq1$, $\beta_{m+2}\leq \lambda_{m+2}$ and using the inequalities (\ref{eq-LpEstimate})-(\ref{eq-lambda--beta}), we get 
\begin{align}\label{eq:estimJ3}
\begin{array}{llll}
J_{k,3} &\leq& \bigg[ \ds\sum_{m\geq 3} \ds\frac{4}{(m+1)!}  m   \lambda_{m+2}^{m+2} M_0^m \mt^m  \bigg] \|w_{k-1}\|_V\|w_{k}\|_V\vspace{2mm}\\
&\leq&  \bigg[\ds\sum_{m\geq 3} \ds\frac{4}{(m+1)!} (m+2)^{\frac{m+2}{2}}\left( C M_0\right)^m   \bigg] \|w_{k-1}\|_V\|w_{k}\|_V\vspace{2mm}\\
&\leq&  \left( C M_0\right)^3\bigg[\ds\sum_{m\geq 3} q_m\left(CM_0\right)^{m-3}\bigg] \|w_{k-1}\|_V\|w_{k}\|_V,
\end{array}
\end{align}
where $q_m = \ds\frac{4}{(m+1)!} (m+2)^{\frac{m+2}{2}}$. Now we define the entire function(See {\rm{\bf Appendix B}}) $$R(z)=\sum_{m=3}^\infty q_mz^{m-3},\;z\in\C.$$
Hence,
\begin{equation}\label{eq:estimJ3-fin}
J_{k,3}  \leq(C M_0)^3R(CM_0)\|w_{k-1}\|_V\|w_{k}\|_V.
\end{equation}
Using  inequalities \eqref{eq:estim0}, \eqref{eq:estimJ1}, \eqref{eq:estimJ2} and \eqref{eq:estimJ3-fin}, we obtain 
$$
\|w_k \|_V^2  \leq \|\varphi\|_{L^\infty(\Gamma_R)}\bigg[\beta_3^3M_0+\beta_4^4M_0^2+(CM_0)^3R(CM_0)\bigg] \|w_{k-1}\|_V\|w_{k}\|_V
$$
and therefore
$$
\|w_k \|_V   \leq \|\varphi\|_{L^\infty(\Gamma_R)}\bigg[ \beta_3^3M_0 + \beta_4^4M_0^2  +  (CM_0)^3R(CM_0)\bigg] \|w_{k-1}\|_V.
$$
By the definition of the admissible set $\Phi_{ad}$ in \eqref{def-adm}, we have 
$$
\|\varphi\|_{L^\infty(\Gamma_R)} < \xi \Lambda \quad \text{ with } \xi\in (0,1),
$$
where  $\Lambda$  is  defined in \eqref{def-Lambda} by:
$$
\Lambda= \bigg[ \beta_3^3M_0 + \beta_4^4M_0^2+(CM_0)^3R(CM_0)\bigg]^{-1}.
$$
Consequently,
\begin{align}
\|w_k \|_V &\leq \xi\Lambda \bigg[ \beta_3^3M_0 + \beta_4^4M_0^2+(CM_0)R(CM_0)\bigg] \|w_{k-1}\|_V\notag \\  
  & \leq K \|w_{k-1}\|_V,     
\end{align}
where $K = \xi \in (0,1)$. Therefore, the sequence $(w_k)_{k\in\N}$ is a contraction in the space $V$. In particular, there exists a constant $K\in(0,1)$ such that 
\begin{equation}
\|w_{k}\|_V \leq K \|w_{k-1}\|_V \qquad k \in \N.
\end{equation}
 This complete the proof of Lemma \ref{lem4.1}.
\begin{remark}
We now complete the proof of the convergence result. Since the sequence $(w_k)_{k\geq1}$ satisfies a contraction estimate in the Banach space $V$, the series $\sum_{k\geq1} w_k$ converges absolutely in $V$. Noting  that $u_k = u_1 + \sum_{j=1}^{k-1}w_j$, it follows that the sequence $(u_k)_{k\geq1}$ is Cauchy in $V$ and converges strongly in $V$ to some limit $u \in V$.
\end{remark}
\section{Existence of a solution of the problem $(S)$}\label{sect-exist}
In this section, we prove that the limit
$$u=\lim_{k\rightarrow\infty}u_k\in V$$ 
is a solution to problem $(S)$.\\
Since $u_k\to u$ strongly in $V$, and each $u_k$ solves the linear problem $(S_k)$ we can pass to limit in the interior of domain $\Omega$ and on tha parts of the  boundary $\Gamma_D$ and $\Gamma_N$. \\
More precisely, we have
$$\begin{array}{lcl}
\Big(\forall k\geq1:\;\;\Delta u_k=0\;\;{\rm in}\;\;\Omega\Big)&\Longrightarrow&\Delta u=0\;\;{\rm in}\;\;\Omega,\\
\Big(\forall k\geq1:\;\;u_k=0\;\;{\rm on}\;\;\Gamma_D\Big)&\Longrightarrow&u=0\;\;{\rm on}\;\;\Gamma_D,\\
\Big(\forall k\geq1:\;\;\ds\frac{\partial u_k}{\partial n}=0\;\;{\rm on}\;\;\Gamma_N\Big)&\Longrightarrow&\ds\frac{\partial u}{\partial n}=0\;\;{\rm on}\;\;\Gamma_N.
\end{array}$$
It remains to identify the limit in the nonlinear Robin boundary condition. To this end, it suffices to prove that
$$\lim_{k\rightarrow\infty}\| f_\alpha(u_{k-1})u_k- f_\alpha(u)u\|_{L^1(\Gamma_R)}=0.$$
For $k\geq1$ set $\tilde{w}_k=u_k-u$. we have
$$\begin{array}{lcl}
\| f_\alpha(u_{k-1})u_k- f_\alpha(u)u\|_{L^1(\Gamma_R)}&\leq&\|f_\alpha(u_{k-1})\tilde{w}_k+(f_\alpha(u_{k-1})-f_\alpha(u))u\|_{L^1(\Gamma_R)}\\
&\leq&\underbrace{\|f_\alpha(u_{k-1})\tilde{w}_k\|_{L^1(\Gamma_R)}}+\underbrace{
\|(f_\alpha(u_{k-1})-f_\alpha(u))u\|_{L^1(\Gamma_R)}}\\
&&\quad\quad\quad\quad\quad\;\shortparallel\quad\quad\quad\quad\quad\quad\quad\quad\quad\quad\quad\quad\quad\shortparallel\\
&&\quad\quad\quad\quad\quad\;X_k\quad\quad\quad\quad\quad\quad\quad\quad\quad\quad\quad\quad\;Y_k\\
\end{array}$$
\begin{enumerate}
\item[$\bullet$] {\bf Estimate of $X_k$:} Using the series expansion of $f_\alpha$ and H\"older's inequality, we obtain 
$$\begin{array}{lcl}
X_k&\leq&\displaystyle \int_{\Gamma_R}|\tilde{w}_k|+\sum_{m=2}^{\infty}|b_{m+1}(\alpha)|\int_{\Gamma_R}|u_{k-1}|^m|\tilde{w}_k|\\
&\leq&\displaystyle (\sigma(\Gamma_R))^{1/2}\|\tilde{w}_k\|_{L^2(\Gamma_R)}
+\sum_{m=2}^{\infty}|b_{m+1}(\alpha)\|u_{k-1}\|_{L^{m+1}(\Gamma_R)}^m\|\tilde{w}_k\|_{L^{m+1}(\Gamma_R)}\\
&\leq&\displaystyle (\sigma(\Gamma_R))^{1/2}\beta_2\|\tilde{w}_k\|_{V}
+\sum_{m=2}^{\infty}|b_{m+1}(\alpha)\beta_{m+1}^{m+1}\|u_{k-1}\|_{V}^m\|\tilde{w}_k\|_{V}\\
&\leq&\displaystyle \Big((\sigma(\Gamma_R))^{1/2}\beta_2
+\sum_{m=2}^{\infty}|b_{m+1}(\alpha)\lambda_{m+1}^{m+1}
M_0^m\Big)\|\tilde{w}_k\|_{V},\;(\beta_{m+1}\leq \lambda_{m+1}).
\end{array}$$
Using the same technique employed in the previous proofs, we can deduce that the series $\displaystyle \sum_{m\geq2}|b_{m+1}(\alpha)\beta_{m+1}^{m+1}M_0^m$ is convergent and $\|\tilde{w}_k\|_{V}\rightarrow0$, then 
$$\displaystyle\lim_{k\rightarrow\infty}X_k=0.$$
\item[$\bullet$]  {\bf Estimate of $Y_k$:} By combining the elementary inequality (\ref{*}) and H\"older inequality, we get 
$$\begin{array}{lcl}
Y_k&\leq&\displaystyle \sum_{m=2}^{\infty}|b_{m+1}(\alpha)|\int_{\Gamma_R}|u_{k-1}^m-u^m|.|u|\\
&\leq&\displaystyle \sum_{m=2}^{\infty}|b_{m+1}(\alpha)|\int_{\Gamma_R}m(|u_{k-1}|^{m-1}+|u|^{m-1}).|\tilde{w}_{k-1}|.|u|\\
&\leq&\displaystyle \sum_{m=2}^{\infty}m|b_{m+1}(\alpha)|
\Big(\int_{\Gamma_R}|u_{k-1}|^{m-1}|\tilde{w}_{k-1}|.|u|+\int_{\Gamma_R}|u|^{m}|.|\tilde{w}_{k-1}|\Big)\\
&\leq&\displaystyle \sum_{m=2}^{\infty}m|b_{m+1}(\alpha)|
\Big(\|u_{k-1}\|_{L^{m+1}(\Gamma_R)}^{m-1}\|\tilde{w}_{k-1}\|_{L^{m+1}(\Gamma_R)}\|u\|_{L^{m+1}(\Gamma_R)}
+\|u\|_{L^{m+1}(\Gamma_R)}^{m}|\|\tilde{w}_{k-1}\|_{L^{m+1}(\Gamma_R)}\Big).
\end{array}$$
By using the Sobolev embedding and the bound-ness of $\|u_{k-1}\|_{V}$ and $\|u\|_{V}$, we obtain
$$\begin{array}{lcl}
Y_k&\leq&\displaystyle \sum_{m=2}^{\infty}m|b_{m+1}(\alpha)|^{m+1}\lambda_{m+1}^{m+1}\Big(\|u_{k-1}\|_{V}^{m-1}\|u\|_{V}
+\|u\|_{V)}^{m}\Big)\|\tilde{w}_{k-1}\|_{V}\\
&\leq&\displaystyle \Big(\sum_{m=2}^{\infty}2m|b_{m+1}(\alpha)|^{m+1}\lambda_{m+1}^{m+1}M_0^{m}\Big)\|\tilde{w}_{k-1}\|_{V}.\\
\end{array}$$
Using the same technique employed in the previous proofs, we can deduce that the series $
\ds\sum_{m\geq2} 2 m |b_{m+1}(\alpha)|\lambda_{m+1}^{m+1} M_0^m$ 
converges and $\|\tilde w_{k-1}\|_V \to 0$ as $k \to \infty$. It follows that
$$
\lim_{k\to\infty} Y_k = 0.
$$
Combining the estimates for $X_k$ and $Y_k$, we conclude that
$$
\lim_{k\to\infty} \| f_\alpha(u_{k-1}) u_k - f_\alpha(u) u \|_{L^1(\Gamma_R)} = 0.
$$
Hence, passing to the limit in the nonlinear Robin boundary condition gives
$$
\frac{\partial u}{\partial n} + \varphi(x) f_\alpha(u) u = g(x) \quad \text{on } \Gamma_R,
$$
which shows that $u$ is indeed a solution of the problem $(S)$.
\end{enumerate}
\section{Uniqueness of solution of the problem $(S)$ in $\label{sec-uniq}\overline{B_V(0,M_0)}$}
Let $v_1,v_2\in V$ such that $\|v_1\|_V,\|v_2\|_V\leq M_0$  be two solution of the problem $(S)$.\\
Set $w=v_1-v_2$. By Green's formula, we have
$$\|w\|_V^2\leq \int_{\Gamma_R}\varphi(x)|f_\alpha(v_1)v_1-f_\alpha(v_2)v_2|.|w|\leq\Lambda \int_{\Gamma_R}|f_\alpha(v_1)v_1-f_\alpha(v_2)v_2|.|w|$$
 using the decomposition
$$f_\alpha(v_1)v_2-f_\alpha(v_2)v_2=f_\alpha(v_1)w+(f_\alpha(v_1)-f_\alpha(v_2))v_2$$
we obtain
$$\|w\|_V\leq \Lambda(I+J),$$
where
$$\begin{array}{lcl}
I&=&\displaystyle\int_{\Gamma_R}|f_\alpha(v_1)|.|w|^2\vspace{2mm}\\
J&=&\displaystyle\int_{\Gamma_R}|f_\alpha(v_1)-f_\alpha(v_2)|.|v_2|.|w|.
\end{array}$$
By applaying the same technique used for the convergence of of the sequence $(u_k)$ to $u$, we can estimate $I$ and $J$ to get
$$\|w\|_V^2\leq \tilde{K}\|w\|_V^2$$
with $\tilde{K}<1$.\\
Hence, it follows that $w=0$ which implies $v_1=v_2$.
\section{Appendices}\label{append}
\subsection{Appendix A} \label{append-A} In this subsection, we prove the property {\bf(P3)} of the function $f_\alpha$:\\
For any $r \in \R^*$, we can write $f_\alpha(r)$ as a series:
 $$
 \begin{array}{llllllll}
0&\leq&f_\alpha(r)&=& \ds\sum_{n=1}^{\infty}\frac{\alpha^n-(-1)^n(1-\alpha)^n}{n!}r^{n-1}\\
& &  &\leq&\ds\sum_{n=1}^{\infty}\frac{|\alpha^n-(-1)^n(1-\alpha)^n|}{n!}|r|^{n-1}\\
& &  &\leq&\ds\sum_{n=1}^{\infty}\frac{2\delta^n}{n!} |r|)^{n-1}\\
& &  &\leq&2\delta\ds\sum_{n=1}^{\infty}\frac{(\delta |r|)^{n-1}}{n!} \\
& &  &\leq&2\delta\bigg(1+ \ds\sum_{n=2}^{\infty}\frac{(\delta |r|)^{n-1}}{n!} \bigg)\\
& &  &\leq&2\delta\bigg(1+ \delta|r|\ds\sum_{n=2}^{\infty}\frac{(\delta |r|)^{n-2}}{n!} \bigg)\\
& &  &\leq&2\delta\bigg(1+ \delta|r|\ds\sum_{n=2}^{\infty}\frac{(\delta |r|)^{n-2}}{(n-2)!} \bigg)\\
& &  &\leq&2\delta(1+ \delta|r|e^{\delta|r|}),
 \end{array}
 $$
which complete this proof.
\subsection{Appendix B}\label{append-B} In this part of Appendix, we prove that the function 
$$R(z)=\sum_{m=3}^\infty q_mz^{m-3},\;q_m=\frac{4}{(m+1)!}(m+2)^{\frac{m+2}{2}},$$
is an entire function.\\
For this, we use Stirling formula, we get
$$\begin{array}{lcl}
q_m&=&\ds4(m+2).\frac{1}{(m+2)!}(m+2)^{\frac{m+2}{2}}\\
&\sim&\ds4(m+2).\frac{1}{(m+2)^{m+2+\frac{1}{2}}e^{-m-2}(2\pi)^{1/2}}(m+2)^{\frac{m+2}{2}}\\
&\sim&\ds\frac{4e^{m+2}(2\pi)^{-1/2}}{(m+2)^{\frac{m+1}{2}}}.
\end{array}$$
Clearly, by the root test, we can conclude that the series is convergent for all $z$ in $\C$. Which complete the proof.
\subsection{Appendix C: Sobolev constants in Fourier series}\label{append-C}
In this appendix, we recall some necessary definitions and results to establish an improvement of Lemma \ref{lem02}.\\
We begin by proving a technical lemma which we use to demonstrate Sobolev's inequality in the periodic case.
\begin{lemma}\label{lem1} For $\theta\in(0,1)$, we have
$$\sum_{k=1}^Nk^{-\theta}\leq 1+\frac{N^{1-\theta}}{1-\theta}\leq 2\frac{N^{1-\theta}}{1-\theta},\;\forall N\in\N.$$
\end{lemma}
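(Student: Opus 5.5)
The plan is to compare the sum with an integral, using that $x\mapsto x^{-\theta}$ is decreasing on $(0,\infty)$ when $\theta\in(0,1)$. First isolate the term $k=1$, which equals $1$. For each $k\geq 2$ and each $x\in[k-1,k]$ we have $k^{-\theta}\leq x^{-\theta}$. Integrating over $[k-1,k]$ gives $k^{-\theta}\leq\int_{k-1}^{k}x^{-\theta}\,dx$. Summing from $k=2$ to $N$ and adding the first term yields
$$\sum_{k=1}^N k^{-\theta}\leq 1+\int_1^N x^{-\theta}\,dx = 1+\frac{N^{1-\theta}-1}{1-\theta}\leq 1+\frac{N^{1-\theta}}{1-\theta}.$$
The integral converges at the lower endpoint for $\theta<1$, but we never need the endpoint $0$: we start at $1$. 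Alternatively, one could integrate from $0$ to $N$ directly, since $\int_{k-1}^k x^{-\theta}dx\geq k^{-\theta}$ also holds for $k=1$. That route gives $N^{1-\theta}/(1-\theta)$ immediately, which is even sharper. Either route proves the first inequality. The case $N=1$ is trivial: the sum is $1$.

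For the second inequality it suffices to show $1\leq \frac{N^{1-\theta}}{1-\theta}$ for every $N\in\N$, $N\geq1$. This holds because $N^{1-\theta}\geq 1$, as $N\geq1$ and $1-\theta>0$, and because $\frac{1}{1-\theta}>1$. Hence
$$1+\frac{N^{1-\theta}}{1-\theta}\leq 2\frac{N^{1-\theta}}{1-\theta}.$$

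None of the steps is a real obstacle. The only point needing care is to keep the integral comparison oriented correctly: bound each term by the integral over the interval to its \emph{left}, where the decreasing function is larger. We must also restrict to $N\geq1$, interpreting $\N$ as starting at $1$ so that the sum is nonempty and $N^{1-\theta}\geq1$.
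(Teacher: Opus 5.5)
Your proof is correct and follows the same route as the paper, which simply says to compare $\sum (i+1)^{-\theta}$ (the terms with $k\geq 2$) against $\int_1^N t^{-\theta}\,dt$; you supply the details with the correct index range $k=2,\dots,N$, whereas the paper's sum up to $i=N$ is off by one. Your observation that integrating from $0$ already gives the sharper bound $N^{1-\theta}/(1-\theta)$ is accurate, and so is your restriction to $N\geq 1$, which the second inequality requires.
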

{\bf Proof.} It suffices to compare $\ds\sum_{i=1}^N(i+1)^{-\theta}$ to the integral $\ds\int_1^Nt^{-\theta}dt$.\\
\noindent Now we introduce the periodic Lebesgue spaces and the periodic Sobolev spaces, which will be used to estimate the Sobolev constants in the context of Fourier series.
\begin{definition}(\cite{QZ1})
\begin{enumerate}
\item  Let $f,g:\R\rightarrow\C$ be two measurable and $2\pi$-periodic functions.
We denote
$$f\sim g\Longleftrightarrow f=g\;a.e.$$
The relation $``\sim''$ is well defined and it is equivalence relation.\\
\item  For $p\in[1,\infty)$, we denote
$$L^p_{2\pi}=\{f:\R\rightarrow\C\;measurable\;and\;2\pi-periodic\;:\int_{[0,2\pi]}|f(t)|^pdt<\infty\}/\sim.$$
For $f\in L^p_{2\pi}$, we denote
$$\|f\|_{L^p_{2\pi}}=\Big(\frac{1}{2\pi}\int_{[0,2\pi]}|f(t)|^pdt\Big)^{1/p}.$$
\item  The space $L^\infty_{2\pi}$ is defined by
$$L^\infty_{2\pi}=\{f:\R\rightarrow\C\;measurable\;and\;2\pi-periodic\;:\exists\,C\geq0;\;|f(t)|\leq C,\;a.e\}/\sim.$$
For $f\in L^\infty_{2\pi}$, we denote
$$\|f\|_{L^\infty_{2\pi}}=\inf\{C\geq0;\;|f(t)|\leq C,\;a.e\}.$$
\item $C_{2\pi}=\{f:\R\rightarrow\C\; continuous\;and\;2\pi- periodic\}$.
\item $C_{2\pi}^\infty=\{f:\R\rightarrow\C\; C^\infty\;and\;2\pi- periodic\}$.
\item $\ds\mathcal P_{2\pi}=\{f=\sum_{-N\leq k\leq N}c_ke^{ikx},\;c_k\in\C\}$: The space of trigonometric polynomial functions.
\item If $f\in L^1_{2\pi}$, the Fourier coefficients of $f$ are defined by
$$\widehat{f}(k)=\frac{1}{2\pi}\int_0^{2\pi}f(t)e^{-ikt}dt,\;k\in\Z.$$
\item $\displaystyle C_0(\Z)=\{(z_k)_{k\in\Z}\in\C^\Z:\;\lim_{k\rightarrow\pm\infty}z_k=0\}.$
\end{enumerate}
\end{definition}

\begin{remark}(\cite{QZ1})
\begin{enumerate}
\item Let $1\leq p\leq \infty$. The normed space $(L^p_{2\pi},\|.\|_{L^p_{2\pi}})$ is complete.
\item Let $1\leq p<\infty$. $C^\infty_{2\pi}$ is dense in the space $(L^p_{2\pi},\|.\|_{L^p_{2\pi}})$.
\item Let $1\leq p\leq q\leq\infty$. we have $L^{q}_{2\pi}\hookrightarrow L^{p}_{2\pi}$. Precisely, we have
$$\|f\|_{L^{p}_{2\pi}}\leq \|f\|_{L^{q}_{2\pi}},\;\forall f\in L^{q}_{2\pi}.$$
\item $\mathcal P_{2\pi}\subset C^\infty_{2\pi}\subset C_{2\pi}$.
\item $\mathcal P_{2\pi}$ is dense in $L^p_{2\pi}$ for all $p\in[1,\infty)$.
\item The operator $\displaystyle\mathcal F:L^1_{2\pi}\rightarrow C_0(\Z),\;f\mapsto(\widehat{f}(k))_{k\in\Z}$ is well defined and injective.
\item The operator $\displaystyle\mathcal F:L^2_{2\pi}\rightarrow l^2(\Z),\;f\mapsto(\widehat{f}(k))_{k\in\Z}$ is well defined and unitary.
\end{enumerate}
\end{remark}
In the following we define the periodic Sobolev space and we give some properties, for more details and proofs see \cite{HE1, AU1} and \cite{BO1}.
\begin{definition} For $s\geq0$, the periodic Sobolev space of index $s$ is defined by
$$H^s_{2\pi}=\{f\in L^2_{2\pi}:\;\sum_{k\in\Z}(1+|k|^2)^s|\widehat{f}(k)|^2<\infty\}.$$
$H^s_{2\pi}$ is equipped by the inner product
$$\langle f/g\rangle_{H^s_{2\pi}}=\sum_{k\in\Z}(1+|k|^2)^s\widehat{f}(k)\overline{\widehat{g}(k)}$$
and the associated norm
$$\|f\|_{H^s_{2\pi}}=\Big(\sum_{k\in\Z}(1+|k|^2)^s|\widehat{f}(k)|^2\Big)^{1/2}.$$
\end{definition}

\begin{proposition} We have
\begin{enumerate}
\item For $s\geq0$: $(H^s_{2\pi},\langle ./.\rangle_{H^s_{2\pi}})$ is an Hilbert space.
\item For $s_2\geq s_1\geq0$, we have $H^{s_2}_{2\pi}\hookrightarrow H^{s_1}_{2\pi}$. Precisely, we have
$$\|f\|_{H^{s_1}_{2\pi}}\leq \|f\|_{H^{s_2}_{2\pi}},\;\forall f\in H^{s_2}_{2\pi}.$$
\end{enumerate}
\end{proposition}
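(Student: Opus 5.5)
The statement has two parts. First, $(H^s_{2\pi},\langle\cdot/\cdot\rangle_{H^s_{2\pi}})$ is a Hilbert space. Second, the embedding $H^{s_2}_{2\pi}\hookrightarrow H^{s_1}_{2\pi}$ holds with constant one. I would handle both by transporting everything to weighted sequence spaces through the Fourier map $\mathcal F$. By the Remark above, $\mathcal F$ is a unitary bijection $L^2_{2\pi}\to l^2(\Z)$.

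For part (1), I first check that $\langle f/g\rangle_{H^s_{2\pi}}$ is a well-defined inner product on $H^s_{2\pi}$. The series converges absolutely by Cauchy--Schwarz applied to $(1+|k|^2)^{s/2}\widehat f(k)$ and $(1+|k|^2)^{s/2}\widehat g(k)$. Sesquilinearity and Hermitian symmetry are immediate. Definiteness follows because $\|f\|_{H^s_{2\pi}}=0$ forces $\widehat f(k)=0$ for every $k$, hence $f=0$ in $L^2_{2\pi}$ by injectivity of $\mathcal F$.

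Completeness is the only real step. Take a Cauchy sequence $(f_n)$ in $H^s_{2\pi}$. Since $(1+|k|^2)^s\ge1$, we have $\|f\|_{L^2_{2\pi}}=\|\mathcal F f\|_{l^2}\le\|f\|_{H^s_{2\pi}}$, so $(f_n)$ is also Cauchy in $L^2_{2\pi}$. By completeness of $L^2_{2\pi}$ it converges there to some $f$, and $\widehat{f_n}(k)\to\widehat f(k)$ for each $k$. Then I apply Fatou's lemma on the counting measure on $\Z$: fix $\varepsilon>0$ and $N$ with $\|f_n-f_m\|_{H^s_{2\pi}}\le\varepsilon$ for $n,m\ge N$, and let $m\to\infty$ inside $\sum_k(1+|k|^2)^s|\widehat{f_n}(k)-\widehat{f_m}(k)|^2$. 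This gives $\sum_k(1+|k|^2)^s|\widehat{f_n}(k)-\widehat f(k)|^2\le\varepsilon^2$. Hence $f_n-f\in H^s_{2\pi}$, so $f\in H^s_{2\pi}$, and $f_n\to f$ in $H^s_{2\pi}$. Equivalently, one can note that $\mathcal F$ is an isometric isomorphism onto the weighted space $l^2(\Z,(1+|k|^2)^s)$, which is an $L^2$ space of a measure and hence complete.

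For part (2), the argument is pointwise in $k$. Because $1+|k|^2\ge1$ and $s_2\ge s_1$, we have $(1+|k|^2)^{s_1}\le(1+|k|^2)^{s_2}$. Summing against $|\widehat f(k)|^2$ gives $\|f\|_{H^{s_1}_{2\pi}}^2\le\|f\|_{H^{s_2}_{2\pi}}^2$. So every $f\in H^{s_2}_{2\pi}$ lies in $H^{s_1}_{2\pi}$, and the inclusion is linear with norm at most one, which is the claimed continuous embedding.

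The main obstacle, mild as it is, is making sure the limit used in the completeness argument is the correct one. Identifying it through $L^2_{2\pi}$ completeness together with coefficientwise convergence avoids any ambiguity, because the limit is then already a genuine element of $L^2_{2\pi}$.
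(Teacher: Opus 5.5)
Your argument is correct and complete. The inner-product axioms are verified, completeness follows from $L^2_{2\pi}$-convergence plus Fatou's lemma on the counting measure (equivalently, from the isometric identification of $H^s_{2\pi}$ with the weighted space $l^2(\Z,(1+|k|^2)^s)$, using that $\mathcal F$ is onto $l^2(\Z)$), and the embedding follows from the termwise comparison $(1+|k|^2)^{s_1}\le(1+|k|^2)^{s_2}$.

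The paper gives no proof of this proposition; it only defers to the cited literature on periodic Sobolev spaces, so yours is simply the standard argument. The one point left implicit is that $H^s_{2\pi}$ is a linear subspace of $L^2_{2\pi}$, which follows from $|a+b|^2\le 2|a|^2+2|b|^2$. You use this when concluding $f=f_n-(f_n-f)\in H^s_{2\pi}$.
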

Now, we give the main result statement for this section.
\begin{proposition}\label{th:ineg-sob72} Let $2<p<\infty$ and $s\in(0,1/2)$ such that  $  \ds\frac{1}{p}+\frac{s}{1}=\frac{1}{2} $,
then $H^s_{2\pi}\hookrightarrow L^p_{2\pi}.$
Precisely, we have
  \begin{equation}
  \|f\|_{L^p_{2\pi}} \leq 2{\tilde R}_p \|f\|_{H^s_{2\pi}},
  \end{equation}
with
\begin{equation}
{\tilde R}_p^p = C_1\frac{p}{p-2} C_2^{p-2} p^{\frac{p-2}{2}},
\end{equation}
  and $C_1,C_2 $ are universal constants.
\end{proposition}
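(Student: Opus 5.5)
Since $\frac1p+s=\frac12$ is the critical Sobolev relation in one dimension, the proof will go through the layer-cake formula combined with a frequency splitting. The constants will be tracked explicitly so that the bound comes out as $\tilde R_p^p$.

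By density of $\mathcal P_{2\pi}$ and Fatou's lemma, it suffices to treat trigonometric polynomials $f$. By homogeneity we may normalize $\|f\|_{H^s_{2\pi}}=1$. With the normalized measure $dt/2\pi$, the layer-cake formula gives
\[
\|f\|_{L^p_{2\pi}}^p=p\int_0^\infty \lambda^{p-1}\,\mu\{|f|>\lambda\}\,d\lambda .
\]
For each level $\lambda>0$ we choose a cutoff $N=N(\lambda)$ and split $f=f_1+f_2$, where $f_1=\sum_{|k|\le N}\widehat f(k)e^{ikt}$. Cauchy--Schwarz and Lemma~\ref{lem1} with $\theta=2s\in(0,1)$ give
\[
\|f_1\|_{L^\infty_{2\pi}}\le \sum_{|k|\le N}|\widehat f(k)|\le\Big(\sum_{|k|\le N}(1+k^2)^{-s}\Big)^{1/2}\le \Big(1+\frac{4N^{1-2s}}{1-2s}\Big)^{1/2}.
\]
Here $1-2s=2/p$, so $\frac{1}{1-2s}=\frac p2$. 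Hence $\|f_1\|_\infty\le A\sqrt p\,N^{1/p}$ with a universal constant $A$. We choose $N$ so that this quantity is at most $\lambda/2$, that is, $N\approx (\lambda/(2A\sqrt p))^{p}$. For small $\lambda$, where $N<1$, we instead use the trivial estimate $\mu\le1$ (or Chebyshev with $\|f\|_{L^2}\le1$).

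With this choice, $\{|f|>\lambda\}\subset\{|f_2|>\lambda/2\}$. Chebyshev's inequality and Parseval then give
\[
\mu\{|f_2|>\lambda/2\}\le \frac4{\lambda^2}\sum_{|k|>N(\lambda)}|\widehat f(k)|^2 .
\]
Inserting this into the layer-cake integral and exchanging sum and integral (Tonelli), each coefficient $|\widehat f(k)|^2$ is integrated over those $\lambda$ with $N(\lambda)<|k|$, i.e. $\lambda\lesssim 2A\sqrt p\,|k|^{1/p}$. This yields
\[
p\int_0^{c\sqrt p|k|^{1/p}}4\lambda^{p-3}\,d\lambda=\frac{4p}{p-2}\,(c\sqrt p)^{p-2}|k|^{(p-2)/p}.
\]
Since $(p-2)/p=2s$, this is bounded by $\frac{4p}{p-2}c^{p-2}p^{\frac{p-2}{2}}(1+k^2)^s$. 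Summing over $k$ gives $\|f\|_{L^p}^p\le C_1\frac{p}{p-2}C_2^{p-2}p^{\frac{p-2}2}=\tilde R_p^p$. The low-$\lambda$ region is harmless: there $\lambda\lesssim\sqrt p$, so it contributes at most $(c\sqrt p)^p$, which is of the same form after enlarging $C_1,C_2$. This gives $\|f\|_{L^p}\le \tilde R_p$, and the factor $2$ in the statement absorbs the split into $f_1,f_2$ and the low-level part.

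The main obstacle is bookkeeping the constants. We must make sure the $1/(1-2s)=p/2$ blow-up from Lemma~\ref{lem1} enters only as $p^{(p-2)/2}$ and not worse, and that the rounding of $N(\lambda)$ to an integer and the zero mode $k=0$ cost only universal factors. If the rounding causes trouble, we will choose $N(\lambda)=\lfloor (\lambda/(2A\sqrt p))^p\rfloor$ and handle $N=0$ separately via $|\widehat f(0)|\le1$.
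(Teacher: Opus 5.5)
Your proposal is correct and follows essentially the same route as the paper: the layer-cake formula, a level-dependent frequency cutoff with the low block bounded in $L^\infty$ via Cauchy--Schwarz and Lemma~\ref{lem1} (using $1/(1-2s)=p/2$), Chebyshev plus Parseval on the high block, and Tonelli to integrate $\lambda^{p-3}$ up to $\sim\sqrt p\,|k|^{1/p}$, which yields $\frac{p}{p-2}C^{p-2}p^{\frac{p-2}{2}}|k|^{2s}$. The only difference is cosmetic: the paper first treats mean-zero $f$ and then adds back $\widehat f(0)$, which is where its factor $2$ comes from, whereas you keep the zero mode in the low block and bound the levels with $N(\lambda)=0$ trivially; that costs only $(c\sqrt p)^p$, which fits the same form of constant after enlarging $C_1$ and $C_2$.
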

\begin{remark} Combining this result with Remark \ref{rem21} we get: for $p\in(2,\infty)$
 $$
  \|f\|_{L^p_{2\pi}} \leq \lambda_p \|f\|_{H^{1/2}_{2\pi}},\;\forall f\in H^{1/2}_{2\pi},
$$
with $\lambda_p=2{\tilde R}_p$.
\end{remark}
{\bf Proof of Proposition \ref{th:ineg-sob72}.} This proof is inspired by the classical case proof in (\cite{BCD1}). Our method of study requires us to distinguish three cases:
\begin{enumerate}
\item[$\bullet$] {\bf First case:} Let  $f\in C^\infty_{2\pi}$ such that $\widehat{f}(0)=0$.\\
We have
$$2\pi\|f\|_{L^p_{2\pi}}^p=p\int_0^\infty t^{p-1}\lambda_1(\{x\in[0,2\pi]:\;|f(x)|>t\})dt.$$
We will decompose $f$ in such a way as to reveal norms of type $L^2$. To do this, let us split $f$ into "low" and "high" frequencies by setting, for $N\in\N_0$,
$$f=f_{1,N}+f_{2,N}$$
with
$$\begin{array}{lll}f_{1,N}&=&\mathcal F^{-1}\big({\bf 1}_{\{|k|< N\}}\widehat{f}(k)\big)\\\\
f_{2,N}&=&\mathcal F^{-1}\big({\bf 1}_{\{|k|\geq N\}}\widehat{f}(k)\big).\\
\end{array}$$
Since the support of the Fourier transform of $f_{1,N}$ is finite, the function $f_{1,N}$ is bounded and more precisely, we have\\
$-$ If $N=0$, then $f_{1,N}=0$\\
$-$ If $N\geq1$, we can write
$$\begin{array}{lll}
2\pi\|f_{1,N}\|_{L^\infty_{2\pi}}&\leq& \displaystyle\sum_{k=-(N-1)}^{N-1}|\widehat{f}(k)|\\
&\leq&\displaystyle\sum_{k=-(N-1)}^{N-1}(1+k^2)^{-s/2}(1+k^2)^{s/2}|\widehat{f}(k)|\\
&\leq&\displaystyle\Big(\sum_{k=-(N-1)}^{N-1}(1+k^2)^{-s}\Big)^{1/2}
\Big(\sum_{k=-(N-1)}^{N-1}(1+k^2)^{s}|\widehat{f}(k)|^2\Big)^{1/2}\\
&\leq&\displaystyle\Big(\sum_{k=-(N-1)}^{N-1}(1+k^2)^{-s}\Big)^{1/2}
\|f\|_{H^s_{2\pi}}.
\end{array}$$
Let $S_N=\displaystyle\sum_{k=-(N-1)}^{N-1}(1+k^2)^{-s} \leq \displaystyle2\sum_{k=0}^{N-1}(1+k^2)^{-s}.
$\\
For $k\geq0$, we have  
$$1+k^2=\displaystyle1+ \frac{1}{2}k^2+\frac{1}{2}k^2\geq\displaystyle \frac{1}{4}+\frac{1}{4}k^2+\frac{1}{2}k\geq\displaystyle \frac{1}{4}(1+k)^2,$$
so
$$(1+k^2)^{-s}\leq 4^s(1+k)^{-2s}.$$
Hence, for $N\geq1$, we get $$S_N\leq\displaystyle4^s\sum_{k=0}^{N-1}(1+k)^{-2s}
\leq\displaystyle4^s\sum_{k=1}^{N}k^{-2s}
\leq\displaystyle4\sum_{k=1}^{N}k^{-2s}.$$
By Lemma \ref{lem1}, we get
$$S_N\leq 8\frac{N^{1-2s}}{1-2s}.$$
Then, in both cases $N=0$ and $N\geq1$ we get
\begin{equation}\label{eq1:ineg-sob}
2\pi\|f_{1,N}\|_{L^\infty_{2\pi}}\leq 2\sqrt{2}(1-2s)^{-1/2}N^{\frac{1}{2}-s}\|f\|_{H^s_{2\pi}}.
\end{equation}
\noindent By applying the triangular inequality, we have, for any integer $N\geq0$,
$$\{|f|>t\}\subset \{|f_{1,N}|>t/2\}\cup \{|f_{2,N}|>t/2\}.$$

\noindent Using the inequality (\ref{eq1:ineg-sob}) above, if we choose $N=N_t\in\N_0$ with

$$N_t\leq \Big(\frac{t(1-2s)^{1/2}}{4C\|f\|_{H^s_{2\pi}}}\Big)^p<N_t+1$$
we obtain
$$\|f_{1,N_t}\|_{L^\infty_{2\pi}}\leq\frac{t}{4}
\Longrightarrow \lambda_1\big(\{|f_{1,N_t}|>t/2\}\big)=0.$$
Therefore, we can deduce that
\begin{equation}\label{eq-ineq11}
2\pi\|f\|_{L^p_{2\pi}}^p\leq p\int_0^\infty t^{p-1}\lambda_1(\{2|f_{2,N_t}|>t\})dt.
\end{equation}
It is well known that
$$\begin{array}{lll}
\lambda_1\Big(\{|f_{2,N_t}|>t/2\}\Big)&=&\displaystyle\int_{\{|f_{2,N_t}|>t/2\}}1\;dx\\
&\leq&\displaystyle\int_{\{|f_{2,N_t}|>t/2\}}\frac{4|f_{2,N_t}(x)|^2}{t^2}dx\\
&\leq&\displaystyle4\frac{\|f_{2,N_t}\|_{L^2_{2\pi}}^2}{t^2}.
\end{array}$$
By the equation (\ref{eq-ineq11}), we obtain
\begin{equation}\label{eq2:ineg-sob}
2\pi\|f\|_{L^p_{2\pi}}^p\leq 4p\int_0^\infty t^{p-3}\|f_{2,N_t}\|_{L^2_{2\pi}}^2dt
\end{equation}
It is well known that the Fourier transform is (up to a constant) an isometry from $L^2_{2\pi}$ to $l^2(\Z)$; we therefore have
$$\|f_{2,N_t}\|_{L^2_{2\pi}}^2=\sum_{|k|\geq N_t}|\widehat{f}(k)|^2.$$
Then by applying inequality  (\ref{eq2:ineg-sob}), we get
$$2\pi\|f\|_{L^p_{2\pi}}^p\leq 4p\sum_{k\in\Z^*}\int_{0}^\infty t^{p-3}{\bf 1}_{\{(t,k);\;|k|\geq N_t\}}(t,k)|\widehat{f}(k)|^2dt.$$
By using the definition of $N_t$, we have, for $k\in\Z^*$,
$$|k|\geq N_t\geq \frac{1}{2}\Big(\frac{t(1-2s)^{1/2}}{4C\|f\|_{H^s_{2\pi}}}\Big)^p\Longrightarrow t\leq \alpha_k=\frac{4C\|f\|_{H^s_{2\pi}}}{(1-2s)^{1/2}}2^{1/p}|k|^{1/p}.$$\\
By applying Fubini's theorem, we obtain($C=\sqrt{2}/\pi$)
$$\begin{array}{lll}
2\pi\|f\|_{L^p_{2\pi}}^p&\leq&4p\displaystyle\sum_{k\in\Z^*}\Big(\int_0^{\alpha_k}t^{p-3}dt\Big)|\widehat{f}(k)|^2\\
&\leq&4p\displaystyle\sum_{k\in\Z^*}\frac{{\alpha_k}^{p-2}}{p-2}|\widehat{f}(k)|^2\\
&\leq&4\displaystyle\frac{p}{p-2}2^{(p-2)/p}\big(\frac{4C}{(1-2s)^{1/2}}\big)^{p-2}\|f\|_{H^s_{2\pi}}^{p-2}\sum_{k\in\Z^*}|k|^{\frac{p-2}{p}}|\widehat{f}(k)|^2\\
&\leq&4\displaystyle\frac{p}{p-2}2^{(p-2)/p}\big(\frac{4C}{(1-2s)^{1/2}}\big)^{p-2}\|f\|_{H^s_{2\pi}}^{p}.
\end{array}$$
Finally, using $1-2s=\frac{2}{p}$ we get
$$\|f\|_{L^p_{2\pi}}^p\leq \frac{1}{2\pi}r_p^p\|f\|_{H^s_{2\pi}}^p,$$
with
$$r_p=\Big(4\frac{p}{p-2}2^{\frac{p-2}{p}}(4C)^{p-2}(\frac{p}{2})^{\frac{p-2}{2}}\Big)^{1/p}.$$
Using the elementary inequalities, for all $p\geq3$,
$$\frac{p}{p-2}\leq3\;\;{\rm and}\;\;2^{\frac{p-2}{p}}\leq2,$$
we get
$$r_p\leq R_p=\Big(24\times (4/\pi)^{p-2}p^{\frac{p-2}{2}}\Big)^{1/p}.$$
Then, we obtain
$$\|f\|_{L^p_{2\pi}}^p\leq \frac{1}{2\pi}R_p^p\|f\|_{H^s_{2\pi}}^p=\tilde{R}_p^p\|f\|_{H^s_{2\pi}}^p,$$
with $\tilde{R}_p=\Big(3\times (4/\pi)^{p-1}p^{\frac{p-2}{2}}\Big)^{1/p}.$
\item[$\bullet$] {\bf Second case:} Let $f\in C_{2\pi}^\infty$ be any element of $C_{2\pi}^\infty$.\\
Put $g=f-\widehat{f}(0)$. By the first case we obtain
$$\|g\|_{L^p_{2\pi}}^p\leq \tilde{R}_p^p\|g\|_{H^s_{2\pi}}^p.$$
Then, we get
$$\begin{array}{lcl}
\|f\|_{L^p_{2\pi}}&=&\|g+\widehat{f}(0)\|_{L^p_{2\pi}}\\
&\leq&\|g\|_{L^p_{2\pi}}+|\widehat{f}(0)|\\
&\leq&\tilde{R}_p\|g\|_{H^s_{2\pi}}+|\widehat{f}(0)|\\
&\leq&2\tilde{R}_p\|f\|_{H^s_{2\pi}}.
\end{array}$$
Which prove the required inequality with the constant $2\tilde{R}_p$.
\item[$\bullet$] {\bf Third case:} Let $f\in H^s_{2\pi}$.\\
Using the extension theorem for a continuous linear map defined on a dense subspace and the second case treated with $\overline{C^\infty_{2\pi}}=H_{2\pi}^s$, we obtain $H^s_{2\pi}\hookrightarrow L^p_{2\pi}$ and 
$$\|f\|_{L^p_{2\pi}}\leq 2\tilde{R}_p\|f\|_{H^s_{2\pi}},\;\forall f\in H^s_{2\pi}.$$
Which complete the proof of Proposition \ref{th:ineg-sob72}.
\end{enumerate}

\end{document}